\newtheorem{theorem}{Theorem}
\theoremstyle{plain}
\newtheorem{corollary}{Corollary}
\newtheorem{lemma}{Lemma}
\newtheorem{proposition}{Proposition}
\newtheorem{remark}{Remark}
\numberwithin{equation}{section}
\begin{document}
\title[]{Remarks on the uniqueness of weak solutions of the incompressible
Navier-Stokes equations}
\author{Kamal N. Soltanov}
\address{{\small National Academy of Sciences of Azerbaijan, Baku, AZERBAIJAN%
}}
\email{sulta.kamal.n@gmail.com}
\urladdr{https://www.researchgate.net/profile/Kamal-Soltanov/research}
\subjclass[2010]{Primary 35K55, 35K61, 35D30, 35Q30; Secondary 76D03, 76N10}
\keywords{Navier-Stokes Equations, Uniqueness, Auxiliary problems,
Solvability, Projection to cross-sections}

\begin{abstract}
This article studies the uniqueness of the weak solution of the
incompressible Navier-Stokes Equations in the 3-dimensional case. Here, the
investigation is provided using two different approaches. The first (the
main) result is obtained for given functions possessing a certain smoothness
using the new approach. The second result is without the complementary
conditions but is, in some sense, the "local" result investigated by another
approach. In addition, here the solvability and uniqueness of the weak
solutions to auxiliary problems lead out from the main problem are
investigated.
\end{abstract}

\maketitle

\section{\protect\bigskip \label{Sec_1}Introduction}

In this article, in the 3-dimensional case, the uniqueness of a weak
solution to the mixed problem with the Dirichlet homogeneous boundary
condition for the incompressible Navier-Stokes equation is investigated.

As it is well-known in the article \cite{Ler1} shown that the Navier-Stokes
equations have a weak solution with suitable properties in the 3-dimensional
case (see, \cite{Hop}, \cite{Lad1}, \cite{MajBer}, \cite{Con1}, \cite{Fef1}, 
\cite{Lio1}, etc.). In articles \cite{LioPro}, \cite{Lad2} proved the
uniqueness of a weak solution to this problem in the 2-dimension case,
without complementary conditions (see, also \cite{Lad1}, \cite{Lio1}), but
in the 3-dimensional case a result of such type doesn't exist. It should be
noted that in the 3-dimensional case, the uniqueness also was studied, but
under complementary conditions on the smoothness of the solution (see, e.g. 
\cite{Hop}, \cite{Lad2}, \cite{Lio1}, \cite{Tem1}, \cite{Ku}, \cite{OL}, 
\cite{Sch1}, etc.), and also there exist such setting of problems for this
equation and the Euler equations, under which the uniqueness isn't (see, 
\cite{Buc-Vic}, \cite{Sch1}, \cite{Shn1}).

The study of the different problems for the Navier--Stokes equation
dedicated sufficiently many works, where the various questions, including
e.g. also the different properties of solutions to these problems (see, \cite%
{Sch2}, \cite{CafKohNir}, \cite{Lin1}, \cite{Lio1}, \cite{Lad1}, \cite%
{ChL-RiMay}, \cite{Gal} \cite{Lio1}, \cite{Lad1}, \cite{Lin1}, \cite{Fef1}, 
\cite{FoiManRosTem}, \cite{FoiRosTem1}, \cite{FoiRosTem2}, \cite{FoiRosTem3}%
, \cite{GlaSveVic}, \cite{HuaWan}, \cite{PerZat}, \cite{Sol1}, \cite{Sol2}, 
\cite{Tem1}, etc.). Studied also different modifications of Navier--Stokes
equation (see, e.g. \cite{Lad1}, \cite{Lio1}, \cite{Sol3}, etc.). \ 

We should note the result of the article \cite{Fur} that is close to the
first main result of this article. In work \cite{Fur} considered the problem
for the equation without the pressure $p\left( t,x\right) $ (equation from (%
\ref{P}) that becomes the problem for studying the uniqueness of a weak
solution $u\left( t,x\right) $ (the speed), which is investigated under some
smoothness conditions. Here is proven the uniqueness of a weak solution $%
u\left( t,x\right) $ to the considered problem for any $f\left( t,x\right) $
from the everywhere dense subset, in a certain sense, of the space $%
L^{2}\left( 0,T:H^{-1/2}\left( 
%TCIMACRO{\U{3a9} }%
%BeginExpansion
\Omega
%EndExpansion
\right) \right) $ when $u_{0}\in H^{1/2}\left( \Omega \right) $ is arbitrary.

Well-known that studying the existence of the weak solution of the
considered problem uses the approach Hopf-Leray (taking into account the
result of de Rham). Therefore, from the definition of a weak solution to the
problem for the Navier-Stokes equation obtained the problem that, roughly
speaking, will have the form 
\begin{equation}
\frac{\partial u}{\partial t}-\nu \Delta u+u\cdot \nabla u=f\left(
t,x\right) ,\ \func{div}u=0,\ u\left( 0,x\right) =u_{0}\left( x\right) ,\
u\left\vert \ _{\left( 0,T\right) \times \partial \Omega }\right. =0
\label{P}
\end{equation}%
for each $p\left( t,\cdot \right) \in L_{2}\left( \Omega \right) $, due to
the condition $\func{div}u=0$. Since this problem only the gradient of
pressure $p$ contains then it can be eliminated from the Navier-Stokes
equations as is noted in many works (see, e.g. \cite{Tao}, \cite{Con},\cite%
{Tem1}, \cite{Lio1}, \cite{Bat}, \cite{Vas}, etc.). So, we study here the
uniqueness of a weak solution $u\left( t,x\right) $ of the problem (\ref{2.1}%
) - (\ref{2.3}) due to the above property of the problem. For the study of
the uniqueness of a weak solution $u$, we use the well-known formulation of
this problem. In what follows, we will base on results from the
above-mentioned works concerning the existence of a weak solution to the
posed problem.

This article is organized as follows. In section 2, the preliminary
information and the explanation of the relationship between problems for the
Navier-Stokes equations and (\ref{2.1}) - (\ref{2.3}) are reduced, moreover,
the main results, and also the necessary auxiliary results are proved. In
section 3, the auxiliary problems are determined, which is constructed using
the problem (\ref{2.1}) - (\ref{2.3}) as, in some sense, projection of this
problem to the cross-sections. In Section 4, the existence of the weak
solution and, in Section 5, the uniqueness of the weak solution of the
auxiliary problem are studied. In Section 6, the first main result (Theorem %
\ref{Th_1}) is proved. In Section 7, the complementary investigation is
provided and the "local" result on the uniqueness of the weak solution of
the problem (\ref{2.1}) - (\ref{2.3}) is proved by certain modification of
the known approach.

\section{\label{Sec_2}Preliminaries and main results}

In this section, the background material is briefly reminded, and some
notations are introduced. Moreover, the main results are reduced, and also
the necessary auxiliary results are proved.

Let $\Omega \subset R^{3}$ be an open bounded local Lipschitz domain (i.e.
from the class $Lip_{loc}$) and $Q^{T}\equiv \left( 0,T\right) \times \Omega 
$, $T>0$ be a number. As is usual in the study of the Navier-Stokes
equation, we denote by $V\left( \Omega \right) $ and $H\left( \Omega \right) 
$ the spaces that are determined as the closure in the topology of $\left(
W_{0}^{1,2}\left( \Omega \right) \right) ^{3}$ and of $\left( L^{2}\left(
\Omega \right) \right) ^{3}$, respectively, of the functions class 
\begin{equation*}
\left\{ \varphi \left\vert \ \varphi \in \left( C_{0}^{\infty }\left( \Omega
\right) \right) ^{3},\right. \func{div}\varphi =0\right\} ,
\end{equation*}%
where $W_{0}^{1,2}\left( \Omega \right) $ is the Sobolev space, $L^{2}\left(
\Omega \right) $ is the Lebesgue space.

We also denote by $H^{1/2}\left( \Omega \right) $ the vector space
determined as $H\left( \Omega \right) $ in the topology of 
\begin{equation*}
\left( W^{1/2,2}\left( \Omega \right) \right) ^{3}\equiv \left\{ v\left\vert
\ v_{i}\in W^{1/2,2}\left( \Omega \right) ,\right. i=1,2,3\right\} ,\quad
v=\left( v_{1},v_{2},v_{3}\right) ,
\end{equation*}%
where $W^{1/2,2}\left( \Omega \right) $ is the Sobolev-Slobodeskij space
(see, \cite{LioMag}, etc.). As well-known, the trace for the function of the
space $H^{1/2}\left( \Omega \right) $ is definite for each sufficiently
smooth surface from $\Omega $ (see, e.g. \cite{LioMag}, \cite{BesIlNik} and
references therein).

The dual space $V\left( \Omega \right) $ is determined as $V^{\ast }\left(
\Omega \right) $ and is the closure of the\ linear continuous functionals
defined on $V\left( \Omega \right) $ in the suitable sense. As it is
well-known, in this case, $H\left( \Omega \right) $ and $V\left( \Omega
\right) $ are the Hilbert spaces, and relations take place

\begin{equation*}
V\left( \Omega \right) \subset H\left( \Omega \right) \equiv H^{\star
}\left( \Omega \right) \subset V^{\ast }\left( \Omega \right) .
\end{equation*}%
Let $V\left( Q^{T}\right) $ be a space determined as 
\begin{equation*}
V\left( Q^{T}\right) \equiv L^{2}\left( 0,T;V\left( \Omega \right) \right)
\cap L^{\infty }\left( 0,T;H\left( \Omega \right) \right)
\end{equation*}%
and $\mathcal{V}\left( Q^{T}\right) $ be a space determined as 
\begin{equation*}
\mathcal{V}\left( Q^{T}\right) \equiv V\left( Q^{T}\right) \cap
W^{1,4/3}\left( 0,T;V^{\ast }(%
%TCIMACRO{\U{3a9} }%
%BeginExpansion
\Omega
%EndExpansion
)\right) .
\end{equation*}

So, from the above reasons follows that for the study of the posed problem
enough to study this question for the following problem

\begin{equation}
\frac{\partial u_{i}}{\partial t}-\nu \Delta u_{i}+\underset{j=1}{\overset{n}%
{\sum }}u_{j}\frac{\partial u_{i}}{\partial x_{j}}=f_{i}\left( t,x\right)
,\quad i=\overline{1,n},\ \nu >0  \label{2.1}
\end{equation}%
\begin{equation}
\func{div}u=\underset{i=1}{\overset{n}{\sum }}\frac{\partial u_{i}}{\partial
x_{i}}=\underset{i=1}{\overset{n}{\sum }}D_{i}u_{i}=0,\quad x\in \Omega
\subset R^{n},\ t>0,  \label{2.2}
\end{equation}%
\begin{equation}
u\left( 0,x\right) =u_{0}\left( x\right) ,\quad x\in \Omega ;\quad
u\left\vert \ _{\left( 0,T\right) \times \partial \Omega }\right. =0.
\label{2.3}
\end{equation}

Here our main problem is the investigation of the posed question in case $%
n=3 $. On the existence of the solution and properties of the existing
solution of this problem dedicated sufficiently many works (see, e.g. books 
\cite{Lio1}, \cite{Tem1}, \cite{Lad1}, \cite{FoiManRosTem}, \cite{Gal}, \cite%
{MajBer}, \cite{Con}, \cite{Bat}, \cite{Vas}, etc. where the properties of
this problem were explained enough clearly).\ 

As it is well-known, problem (\ref{2.1}) - (\ref{2.3}) possesses a weak
solution $u$ in the space $\mathcal{V}\left( Q^{T}\right) \times L^{2}\left(
Q^{T}\right) $ for each $u_{0}\left( x\right) ,$ $f(x,t)$, which are
contained in the spaces $u_{0}\in H\left( \Omega \right) ,$ $f\in
L^{2}\left( 0,T;V^{\ast }\left( \Omega \right) \right) $ (see, e.g. \cite%
{Lio1}, \cite{Tem1}, \cite{Gal}, \cite{Con}, \cite{Bat}, \cite{Vas}, etc.).
Moreover, the solution $u$ is a weakly continuous function concerning
variable $t$.

As usual, a function $u\in \mathcal{V}\left( Q^{T}\right) $ is called a weak
solution of problem (\ref{2.1}) - (\ref{2.3}) if $u\left( t\right) $ almost
everywhere in $\left( 0,T\right) $ satisfies the following equation 
\begin{equation}
\frac{d}{dt}\left\langle u,v\right\rangle -\left\langle \nu \Delta
u,v\right\rangle +\left\langle \underset{j=1}{\overset{n}{\sum }}%
u_{j}D_{j}u,v\right\rangle =\left\langle f,v\right\rangle ,  \label{2.3a}
\end{equation}%
for any $v\in V\left( \Omega \right) $ and initial condition $\left\langle
u\left( 0\right) ,v\right\rangle =\left\langle u_{0},v\right\rangle $ for
any $v\in H\left( \Omega \right) $, in addition, $u\left( t\right) $\ is
weakly continuous from $\left[ 0,T\right] $ into $H\left( \Omega \right) $
(i.e. $\forall v\in H\left( \Omega \right) $, $t\longrightarrow \left\langle
u\left( t\right) ,v\right\rangle $ is a continuous scalar function, and
consequently $\left\langle u\left( 0\right) ,v\right\rangle =\left\langle
u_{0},v\right\rangle $).

\textbf{Now we can provide the main results of this article. }

\begin{theorem}
\label{Th_1}Let $\Omega \subset R^{3}$ be an open bounded domain from the
class $Lip_{loc}$, $T>0$ be a number. If given functions $u_{0}$, $f$
satisfy of conditions $u_{0}\in H^{1/2}\left( \Omega \right) $, $f\in
L^{2}\left( 0,T;H^{1/2}\left( \Omega \right) \right) $ then weak solution $%
u\left( t,x\right) $ of the problem (\ref{2.1}) - (\ref{2.3}) is unique, for 
$t\in \left( 0,T\right] $.
\end{theorem}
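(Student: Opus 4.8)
The plan is to prove uniqueness by a careful energy estimate on the difference of two weak solutions, where the extra regularity $u_0 \in H^{1/2}(\Omega)$, $f \in L^2(0,T;H^{1/2}(\Omega))$ is used to bootstrap a weak solution to one enjoying enough smoothness that the standard uniqueness argument closes. Concretely, suppose $u$ and $w$ are two weak solutions of \eqref{2.1}--\eqref{2.3} with the same data, and set $z := u - w$. Subtracting the weak formulations \eqref{2.3a} for $u$ and $w$, one obtains, for a.e. $t$,
\begin{equation*}
\frac{1}{2}\frac{d}{dt}\|z\|_{H(\Omega)}^2 + \nu\|\nabla z\|_{L^2}^2 = -\left\langle (z\cdot\nabla)u,\, z\right\rangle,
\end{equation*}
after using $\func{div} u = \func{div} w = 0$ to kill the symmetric trilinear terms. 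The whole argument reduces to controlling the right-hand side by $C(t)\|z\|_{H(\Omega)}^2$ with $C \in L^1(0,T)$, after which Gronwall's inequality and $z(0)=0$ give $z\equiv 0$.

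The key step is therefore to establish, from the hypotheses on $u_0$ and $f$, that a weak solution $u$ has enough regularity for the trilinear term to be tamed. In the $3$-dimensional case the critical quantity is the Serrin-type norm: if one can show $u \in L^2(0,T; W^{1,3}(\Omega))$ or equivalently $\nabla u \in L^2(0,T;L^3)$, then by Hölder and the Gagliardo--Nirenberg inequality $\|z\|_{L^6}\le C\|\nabla z\|_{L^2}$,
\begin{equation*}
|\langle (z\cdot\nabla)u, z\rangle| \le \|\nabla u\|_{L^3}\|z\|_{L^{6}}\|z\|_{L^{2}} \le \nu\|\nabla z\|_{L^2}^2 + \frac{C}{\nu}\|\nabla u\|_{L^3}^2\|z\|_{H}^2,
\end{equation*}
which is exactly of the required Gronwall form since $\|\nabla u\|_{L^3}^2 \in L^1(0,T)$. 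The plan is to obtain this regularity by a parabolic smoothing / energy estimate at the $H^{1/2}$ level: testing the equation against $(-\Delta)^{1/2} u$ (or working in the auxiliary-problem framework set up in Sections 3--5, projecting onto cross-sections), using that $f \in L^2(0,T;H^{1/2})$ and $u_0 \in H^{1/2}$, to deduce $u \in L^\infty(0,T;H^{1/2}) \cap L^2(0,T;H^{3/2})$; interpolating, $H^{3/2}(\Omega) \hookrightarrow W^{1,3}(\Omega)$ in dimension $3$, which delivers $\nabla u \in L^2(0,T;L^3)$.

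I expect the main obstacle to be precisely the justification of this higher-energy estimate: formally testing the Navier--Stokes system against $(-\Delta)^{1/2}u$ produces a trilinear term $\langle (u\cdot\nabla)u,\,(-\Delta)^{1/2}u\rangle$ that is not obviously bounded by the dissipation for a general weak (Leray--Hopf) solution in $3$D — this is morally the same difficulty as global regularity. The resolution the paper presumably uses is to run the estimate on the approximate/auxiliary problems of Sections 3--5 (finite-dimensional or cross-sectional projections), where all manipulations are legitimate, obtain the bound uniformly, and pass to the limit; the density statement alluded to in the introduction (data in an everywhere-dense subset) and the \texttt{Lip\_loc} regularity of $\partial\Omega$ are what make the trace of $H^{1/2}$ functions on cross-sections well-defined, so the projection argument is available. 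A secondary technical point is the validity of the energy equality $\tfrac12\tfrac{d}{dt}\|z\|_H^2 = \dots$ for the difference $z$: this requires $z \in L^4(0,T;V)$ or a comparable condition so that $z$ is an admissible test function in its own equation, which again follows once the $H^{3/2}$ regularity above is in hand. Once these two points are secured, the conclusion — uniqueness on $(0,T]$ — is immediate from Gronwall.
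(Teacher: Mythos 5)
Your proposal has a genuine gap at its central step, and you have in fact named it yourself: the passage from $u_{0}\in H^{1/2}(\Omega)$, $f\in L^{2}(0,T;H^{1/2}(\Omega))$ to $u\in L^{\infty}(0,T;H^{1/2})\cap L^{2}(0,T;H^{3/2})$ on the whole interval $(0,T]$. Testing against $(-\Delta)^{1/2}u$ produces the trilinear term $\langle (u\cdot\nabla)u,(-\Delta)^{1/2}u\rangle$, which is supercritical in three dimensions: for large data it cannot be absorbed by the dissipation $\nu\|(-\Delta)^{3/4}u\|_{L^{2}}^{2}$ up to a Gronwall-admissible factor, and this failure is uniform along Galerkin or any other approximation, so the remedy you suggest (run the estimate on approximate or auxiliary problems and pass to the limit) does not repair it --- the obstruction is the estimate itself, not the legitimacy of the formal manipulations. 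Since $H^{1/2}$ is exactly the scale-critical space, what your scheme can actually deliver is the Fujita--Kato local strong solution and, via weak--strong uniqueness, uniqueness on some interval $(0,T^{\ast})$ with $T^{\ast}$ depending on the data, not the conclusion for all $t\in(0,T]$ asserted in the theorem. A global bound $\nabla u\in L^{2}(0,T;L^{3})$ is a Serrin-class regularity statement which, for this class of data, is essentially equivalent to the open global regularity problem, so it cannot be dispatched as a technical point.

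The paper's own argument is of an entirely different nature and attempts no regularity bootstrap. It restricts the problem to two-dimensional cross-sections $\Omega_{L}=\Omega\cap L$ of the domain by hyperplanes $L$ (Section \ref{Sec_3}), proves existence and uniqueness of weak solutions of the resulting auxiliary two-dimensional problems (Theorems \ref{Th_2.1} and \ref{Th_2.2}, where the classical two-dimensional Ladyzhenskaya-type estimate does close the Gronwall argument), and then invokes the measure-theoretic Lemmas \ref{L_2.1} and \ref{L_2.2} to conclude that two distinct weak solutions of the three-dimensional problem would have to differ on a positive-measure family of cross-sections, contradicting the auxiliary uniqueness. In that scheme the hypotheses $u_{0}\in H^{1/2}(\Omega)$, $f\in L^{2}(0,T;H^{1/2}(\Omega))$ are used only so that the data have well-defined restrictions (traces) on the cross-sections, not to gain parabolic smoothing. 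So your route is not a reconstruction of the paper's proof, and as a standalone argument it is incomplete precisely at the step you flagged.
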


We should note that the well-known theorems on weak solvability assumed the
following conditions for the given functions $u_{0}\in H\left( \Omega
\right) ,$ $f\in L^{2}\left( 0,T;V^{\ast }\left( \Omega \right) \right) $,
but unlike theirs, the Theorem \ref{Th_1} assumes that these functions
should be a few smooth, namely, these be satisfied conditions $u_{0}\in
H^{1/2}\left( \Omega \right) $, $f\in L^{2}\left( 0,T;H^{1/2}\left( \Omega
\right) \right) $.

The main result of case (2) in this paper is the following uniqueness
theorem.

\begin{theorem}
\label{Th_2}Let $\Omega \in R$ be an open bounded domain from the class $%
Lip_{loc}$, $T>0$ be a number, and $\left( u_{0},f\right) \in H\left( \Omega
\right) \times L^{2}\left( 0,T;V^{\ast }\left( \Omega \right) \right) $. Let
the problem (\ref{2.1}) - (\ref{2.3}) has a weak solution $u\left(
t,x\right) $ that belongs to the space $u\in \mathcal{V}\left( Q^{T}\right) $%
. Then a weak solution u(t,x) is unique if either $\underset{\Omega }{\int }%
\left\vert B\left( w,w\right) \right\vert dx\geq 0$ or $\nu \lambda _{1}^{%
\frac{1}{4}}\geq c^{2}\underset{i=1}{\overset{3}{\sum }}\left\Vert
D_{i}u_{j}\left( t\right) \right\Vert _{2}$ be fulfilled for $t\in \left( 0,T%
\right] $, where $\lambda _{1}$ is minimum of the spectrum (or the first
eigenvalue) of the operator Laplace.
\end{theorem}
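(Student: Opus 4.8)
The plan is to run the classical energy/uniqueness argument for Navier--Stokes: assume $u^{(1)},u^{(2)}\in\mathcal V(Q^T)$ are two weak solutions with the same data $(u_0,f)$, set $w=u^{(1)}-u^{(2)}$, and derive an inequality for $\tfrac{d}{dt}\|w(t)\|_2^2$ that forces $w\equiv 0$ by Gronwall. Subtracting the weak formulation \eqref{2.3a} written for $u^{(1)}$ and for $u^{(2)}$, and (formally) testing with $v=w(t)$, I get
\begin{equation*}
\frac{1}{2}\frac{d}{dt}\|w(t)\|_2^2+\nu\|\nabla w(t)\|_2^2
= -\Big\langle \sum_{j=1}^{3}u^{(1)}_j D_j u^{(1)}-\sum_{j=1}^{3}u^{(2)}_j D_j u^{(2)},\,w\Big\rangle .
\end{equation*}
Writing the bilinear term $B(a,b)=\sum_j a_j D_j b$, the right side is $-\langle B(w,u^{(1)}),w\rangle-\langle B(u^{(2)},w),w\rangle$, and since $\operatorname{div}u^{(2)}=0$ the standard antisymmetry gives $\langle B(u^{(2)},w),w\rangle=0$. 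So everything reduces to controlling $\langle B(w,u^{(1)}),w\rangle$ (equivalently, after one more integration by parts using $\operatorname{div}w=0$, a term of the form $-\langle B(w,w),u^{(1)}\rangle$, which is where the quantity $\int_\Omega|B(w,w)|\,dx$ in the hypothesis enters).

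Next I split into the two cases of the hypothesis. In the first case, where $\int_\Omega|B(w,w)|\,dx\ge 0$ is assumed (this should be read together with a sign/structure condition that makes $\langle B(w,w),u^{(1)}\rangle$ have the favorable sign, or be absorbed), the cross term either drops out or can be thrown to the left-hand side, leaving $\tfrac12\tfrac{d}{dt}\|w\|_2^2+\nu\|\nabla w\|_2^2\le 0$; together with $w(0)=0$ this gives $w\equiv0$ immediately. In the second case I estimate the cross term by Hölder and the definition of $B$: $|\langle B(w,u^{(1)}),w\rangle|\le \sum_{i}\|D_i u^{(1)}_j\|_2\,\|w\|_4^2$ (up to combinatorial constants), then invoke the Sobolev/Ladyzhenskaya-type embedding $\|w\|_4^2\le c^2\|w\|_2^{1/2}\|\nabla w\|_2^{3/2}$ in $R^3$ together with the Poincaré inequality $\|w\|_2\le \lambda_1^{-1/2}\|\nabla w\|_2$; combining these turns the bound into $c^2\big(\sum_i\|D_iu_j^{(1)}(t)\|_2\big)\lambda_1^{-1/4}\|\nabla w\|_2^2$. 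The hypothesis $\nu\lambda_1^{1/4}\ge c^2\sum_i\|D_iu_j(t)\|_2$ is exactly what makes this $\le\nu\|\nabla w\|_2^2$, so again $\tfrac12\tfrac{d}{dt}\|w\|_2^2\le 0$ and $w\equiv0$ on $(0,T]$.

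The main obstacle, and the point that needs care rather than routine calculation, is justifying that $w(t)$ is an admissible test function: a generic weak solution in $\mathcal V(Q^T)=V(Q^T)\cap W^{1,4/3}(0,T;V^*(\Omega))$ has $\partial_t u\in L^{4/3}(0,T;V^*)$ while $w\in L^2(0,T;V)\cap L^\infty(0,T;H)$, so the pairing $\langle\partial_t w,w\rangle$ and the identity $\langle\partial_t w,w\rangle=\tfrac12\tfrac{d}{dt}\|w\|_2^2$ require a Lions--Magenes type lemma on $\tfrac{d}{dt}\|w\|^2$, plus a density/regularization argument (mollify in time) to make the nonlinear term integrable — note $B(w,u^{(1)})$ must be shown to lie in $L^1$ in time, which is where the $L^\infty_t H$ bound on $w$ and the $L^2_tV$ bound get used. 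I would carry out this regularization first, then the antisymmetry identities, then the two-case estimate, then Gronwall; the embedding constant $c$ in the statement is to be identified with the Ladyzhenskaya constant appearing in this step.
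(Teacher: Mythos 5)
Your proposal follows essentially the same route as the paper's Section~\ref{Sec_7} argument: the energy identity for $w=u^{(1)}-u^{(2)}$, antisymmetry of the trilinear form killing $\langle B(u^{(2)},w),w\rangle$ so that only the quadratic form $\sum_{j,k}\langle (D_j u^{(1)}_k)\,w_k,w_j\rangle$ survives, the sign case yielding $\tfrac12\tfrac{d}{dt}\Vert w\Vert_2^2+\nu\Vert\nabla w\Vert_2^2\le 0$ directly, and the second case handled by the Ladyzhenskaya/Gagliardo--Nirenberg bound $\Vert w\Vert_4^2\le c^2\Vert w\Vert_2^{1/2}\Vert\nabla w\Vert_2^{3/2}$ together with the Poincar\'e inequality involving $\lambda_1$, so that the hypothesis $\nu\lambda_1^{1/4}\ge c^2\sum_i\Vert D_iu_j(t)\Vert_2$ absorbs the convective term into the viscous one and Gronwall gives $w\equiv 0$. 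The differences are minor: the paper additionally reduces $B(w,w)$ to canonical form with coefficients $b_i$ (not actually needed for the two stated conditions), while the test-function/integrability issue you rightly flag (since $\partial_t w\in L^{4/3}(0,T;V^{\ast}(\Omega))$ only) is passed over silently in the paper, which simply asserts the energy identity (\ref{7.1}).
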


We should note that since $w\in R^{3}$ we used the relation between the
obtained quadratic form and the surface in $R^{3}$.

We will go over to the main question: Let problem (\ref{2.1}) - (\ref{2.3})
have two different weak solutions $u,v\in \mathcal{V}\left( Q^{T}\right) $,
then we get that function $w(t,x)=u(t,x)-v(t,x)$ is the weak solution to the
following problem 
\begin{equation}
\frac{1}{2}\frac{\partial }{\partial t}\left\Vert w\right\Vert _{2}^{2}+\nu
\left\Vert \nabla w\right\Vert _{2}^{2}+\underset{j,k=1}{\overset{3}{\sum }}%
\left\langle \frac{\partial v_{k}}{\partial x_{j}}w_{k},w_{j}\right\rangle
=0,  \label{2.4a}
\end{equation}%
\begin{equation}
w\left( 0,x\right) \equiv w_{0}\left( x\right) =0,\quad x\in \Omega ;\quad
w\left\vert \ _{\left( 0,T\right) \times \partial \Omega }\right. =0,
\label{2.4}
\end{equation}%
where $\left\langle g,h\right\rangle =\underset{i=1}{\overset{3}{\sum }}%
\underset{\Omega }{\int }g_{i}h_{i}dx$ for any $g,h\in \left( H\left( \Omega
\right) \right) ^{3}$, or $g\in V\left( \Omega \right) $ and $h\in V^{\ast
}\left( \Omega \right) $, respectively. So, for the proof of the uniqueness
of a weak solution, it needs to show that $w\equiv 0$ in the sense of the
appropriate space. In other hand, if $u,v\in \mathcal{V}\left( Q^{T}\right) $
two different weak solutions to problem (\ref{2.1}) - (\ref{2.3}) then $w\in 
\mathcal{V}\left( Q^{T}\right) $ and $w(t,x)\neq 0$, at least, on some
subdomain $Q_{1}^{T}$ of $Q^{T}$, which Lebesgue measure great than zero. In
other words, there exists such subdomain $\Omega _{1}$ and interval $\left(
t_{1},t_{2}\right) \subseteq \left( 0,T\right] $ that 
\begin{equation*}
Q_{1}^{T}\subseteq \left( t_{1},t_{2}\right) \times \Omega _{1}\subseteq
Q^{T},\quad mes_{4}\left( Q_{1}^{T}\right) >0,
\end{equation*}%
i.e. 
\begin{equation}
mes_{4}\left( \left\{ (t,x)\in Q^{T}\left\vert \ \left\vert
w(t,x)\right\vert \right. >0\right\} \right) =mes_{4}\left( Q_{1}^{T}\right)
>0,  \label{2.5}
\end{equation}%
where $mes_{4}\left( Q_{1}^{T}\right) $ denotes the measure of $Q_{1}^{T}$
in $R^{4}$ (i.e. $mes_{k}$ denotes the Lebesgue measure on $k-$dimensional
subspace $R^{n}$). Consequently, $mes_{3}(\Omega _{1})>0$.

So, for proof of the uniqueness, we will employ a different procedure,
namely, use the above-mentioned argument. Therefore, we begin to prove some
auxiliary results.

\begin{lemma}
\label{L_2.1}Let $G\subset R^{n}$ be a subset that is Lebesgue measurable,
then the following statements are equivalent:

1) $\infty >mes_{n}\left( G\right) >0;$

2) there exist such subsets $I\subset R^{1}$, $\infty >mes_{1}\left(
I\right) >0$ and $G_{\beta }\subset L_{\beta ,n-1}$, $\infty
>mes_{n-1}\left( G_{\beta }\right) >0$ that $G=\underset{\beta \in I}{\cup }%
G_{\beta }\cup N$, where $N$ is the set with $mes_{n-1}\left( N\right) =0$,
and $L_{\beta ,n-1}$ is the hyperplane of $R^{n}$, with $co\dim _{n}L_{\beta
,n-1}=1$, and $G\cap L_{\gamma ,n-1}\neq \varnothing $ for any $\beta \in I$
, which is generated by some fixed vector $y_{0}\in R^{n}$ and defined as
follow 
\begin{equation*}
L_{\beta ,n-1}\equiv \left\{ y\in R^{n}\left\vert \ \left\langle
y_{0},y\right\rangle =\beta \right. \right\} ,\quad \forall \beta \in I.
\end{equation*}
\end{lemma}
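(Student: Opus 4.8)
The plan is to read this lemma as a packaging of the Fubini--Tonelli slicing theorem, so the whole argument reduces to one computation performed once and then used in two directions. First I would fix the direction: after an orthogonal change of variables there is no loss in taking $y_{0}=e_{n}$, so that the hyperplanes $L_{\beta ,n-1}$ become $\{y:y_{n}=\beta \}$, are identified with $R^{n-1}$, and the slice of $G$ at level $\beta $ is $G_{\beta }:=\{y^{\prime }\in R^{n-1}:(y^{\prime },\beta )\in G\}$. Lebesgue measurability of $G$ then gives, via Tonelli, that $G_{\beta }$ is $mes_{n-1}$-measurable for a.e. $\beta $, that $\beta \mapsto mes_{n-1}(G_{\beta })$ is measurable, and that $mes_{n}(G)=\int_{R}mes_{n-1}(G_{\beta })\,d\beta $. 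This single identity is the engine for both implications.

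For $1)\Rightarrow 2)$ I would set $I:=\{\beta \in R:mes_{n-1}(G_{\beta })>0\}$ and $N:=\bigcup_{\beta \notin I}G_{\beta }$, the union of the null slices together with the exceptional slices (the latter being attained only for $\beta $ in a $mes_{1}$-null set, so that their union is $mes_{n}$-negligible). Then by construction $G=\big(\bigcup_{\beta \in I}G_{\beta }\big)\cup N$ and $G\cap L_{\beta ,n-1}=G_{\beta }\neq \varnothing $ for every $\beta \in I$; the slicing identity shows $mes_{n}(N)=0$, and it forces $mes_{1}(I)>0$, since $mes_{1}(I)=0$ would give $mes_{n}(G)=\int_{I}mes_{n-1}(G_{\beta })\,d\beta =0$, contradicting $1)$. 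This yields the decomposition in $2)$ with these choices. For the converse $2)\Rightarrow 1)$ I would run the same identity backwards: from a representation as in $2)$, using that $N$ is negligible and that $G$ contains each $G_{\beta }$, one gets $mes_{n}(G)=\int_{I}mes_{n-1}(G_{\beta })\,d\beta $, which is strictly positive because the integrand is $>0$ on $I$ and $mes_{1}(I)>0$; hence $mes_{n}(G)>0$.

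The only point that is not entirely automatic is the finiteness half of the two-sided bounds: that $mes_{1}(I)<\infty $, that $mes_{n-1}(G_{\beta })<\infty $, and that the integral above is finite. In the only situation in which the lemma will be applied, namely $G=Q_{1}^{T}\subseteq Q^{T}$, the set $G$ is bounded, so every slice and the projection set $I$ are bounded and these finiteness assertions are immediate; in the general measurable case one first intersects $G$ with a ball $B_{R}$ chosen so that $mes_{n}(G\cap B_{R})>0$ and argues with $G\cap B_{R}$, reducing everything to the bounded case. I do not anticipate a genuine obstacle here: the substance of the statement is exactly Tonelli's theorem, and the only care required is the bookkeeping of the exceptional null set $N$ and of these finiteness conditions.
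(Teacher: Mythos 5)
Your proposal is correct and follows essentially the same route as the paper: the paper's contradiction argument (a parameter set of $mes_{1}$-zero with positive-measure slices, or null slices over the rest, forces $mes_{n}(G)=0$) is just an informal version of the Tonelli identity $mes_{n}(G)=\int mes_{n-1}(G_{\beta })\,d\beta $ that you invoke explicitly in both directions. Your explicit reduction to $y_{0}=e_{n}$ and your remarks on the finiteness bookkeeping are in fact more careful than the paper's own treatment, which passes over these points silently.
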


\begin{proof}
Let $mes_{n}\left( G\right) >0$ and consider the class of hyperplanes $%
L_{\gamma ,n-1}$ for which $G\cap L_{\gamma ,n-1}\neq \varnothing $ and $%
\gamma \in I_{1}$, where $I_{1}\subset R^{1}$\ be some subset. It is clear
that 
\begin{equation*}
G\equiv \underset{\gamma \in I_{1}}{\bigcup }\left\{ x\in G\cap L_{\gamma
,n-1}\left\vert \ \gamma \in I_{1}\right. \right\} .
\end{equation*}%
Then there exists a subclass of hyperplanes $\left\{ L_{\gamma
,n-1}\left\vert \ \gamma \in I_{1}\right. \right\} $, for which

$mes_{n-1}\left( G\cap L_{\gamma ,n-1}\right) >0$ is fulfilled. The number
of such type hyperplanes cannot be less than countable or equal it because $%
mes_{n}\left( G\right) >0$, moreover, this subclass of $I_{1}$\ must possess
the $R^{1}$ measure greater than $0$ since $mes_{n}\left( G\right) >0$.
Indeed, let $I_{1,0}$ be this subclass and $mes_{1}\left( I_{1,0}\right) =0$%
. In this case, we get the subset 
\begin{equation*}
\left\{ \left( \gamma ,y\right) \in I_{1,0}\times G\cap L_{\gamma
,n-1}\left\vert \ \gamma \in I_{1,0},y\in G\cap L_{\gamma ,n-1}\right.
\right\} \subset R^{n}
\end{equation*}%
where $mes_{n-1}\left( G\cap L_{\gamma ,n-1}\right) >0$ for all $\gamma \in
I_{1,0}$, but $mes_{1}\left( I_{1,0}\right) =0$, then 
\begin{equation*}
mes_{n}\left( \left\{ \left( \gamma ,y\right) \in I_{1,0}\times G\cap
L_{\gamma ,n-1}\left\vert \ \gamma \in I_{1,0}\right. \right\} \right) =0.
\end{equation*}%
On the other hand if $mes_{n-1}\left( G\cap L_{\gamma ,n-1}\right) =0$ for
all $\gamma \in I_{1}-I_{1,0}$ then 
\begin{equation*}
mes_{n}\left( \left\{ \left( \gamma ,y\right) \in I_{1}\times G\cap
L_{\gamma ,n-1}\left\vert \ \gamma \in I_{1}\right. \right\} \right) =0,
\end{equation*}%
whence follows 
\begin{equation*}
mes_{n}\left( G\right) =mes_{n}\left( \left\{ \left( \gamma ,y\right) \in
I_{1}\times G\cap L_{\gamma ,n-1}\left\vert \ \gamma \in I_{1}\right.
\right\} \right) =0.
\end{equation*}

But this contradicts the condition $mes_{n}\left( G\right) >0$.
Consequently, statement 2 holds.

Let statement 2 hold. It is clear that the class of hyperplanes $L_{\beta
,n-1}$ defined in such a way are parallel and also we can define the class
of subsets of $G$ as its cross-section with hyperplanes, i.e. in the form: $%
G_{\beta }\equiv G\cap L_{\beta ,n-1}$, \ \ $\beta \in I$. Then $G_{\beta
}\neq \varnothing $ and we can write $G_{\beta }\equiv G\cap L_{\beta ,n-1}$%
, $\beta \in I$, moreover $G\equiv \underset{\beta \in I}{\bigcup }\left\{
x\in G\cap L_{\beta ,n-1}\left\vert \ \beta \in I\right. \right\} \cup N$.
Whence we get 
\begin{equation*}
G\equiv \left\{ \left( \beta ,x\right) \in I\times G\cap L_{\beta
,n-1}\left\vert \ \beta \in I,x\in G\cap L_{\beta ,n-1}\right. \right\} \cup
N.
\end{equation*}

Consequently, $mes_{n}\left( G\right) >0$ by virtue of conditions: $%
mes_{1}\left( I\right) >0$ and $mes_{n-1}\left( G_{\beta }\right) >0$ for
any $\beta \in I$.
\end{proof}

Lemma \ref{L_2.1} shows that for the study of the measure of some subset $%
%TCIMACRO{\U{3a9} }%
%BeginExpansion
\Omega
%EndExpansion
\subseteq R^{n}$ it is enough to study its stratifications by a class of
corresponding hyperplanes.

\begin{lemma}
\label{L_2.2}Let problem (\ref{2.1}) -(\ref{2.3}) have, at least, two
different solutions $u,v$ that are contained in $\mathcal{V}\left(
Q^{T}\right) $ and assume that $Q_{1}^{T}\subseteq Q^{T}$ is one of a
subdomain of $Q^{T}$ where $u$ and $v$ are different. Then there exists, at
least, one class of parallel hyperplanes $L_{\alpha }$, $\alpha \in
I\subseteq \left( \alpha _{1},\alpha _{2}\right) \subset R^{1}$ ($\alpha
_{2}>\alpha _{1}$)\ with $co\dim _{R^{3}}L_{\alpha }=1$ such, that $u\neq v$
on $Q_{L_{\alpha }}^{T}\equiv \left[ \left( 0,T\right) \times \left( \Omega
\cap L_{\alpha }\right) \right] \cap Q_{1}^{T}$, and vice versa, where $%
mes_{1}\left( I\right) >0$, $mes_{2}\left( \Omega \cap L_{\alpha }\right) >0$
and $L_{\alpha }$ are hyperplanes which are defined as follows: there is
such vector $x_{0}\in S_{1}^{R^{3}}\left( 0\right) $ that 
\begin{equation*}
L_{\alpha }\equiv \left\{ x\in R^{3}\left\vert \ \left\langle
x_{0},x\right\rangle =\alpha ,\right. \ \forall \alpha \in I\right\} .
\end{equation*}
\end{lemma}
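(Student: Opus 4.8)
\emph{Proof plan.} The plan is to obtain this lemma from Lemma \ref{L_2.1}, applied to the four-dimensional set on which the two solutions disagree, but with a generating direction taken inside the spatial factor $R^3$ only.

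First I would recall, from the discussion preceding (\ref{2.5}), that for $w=u-v\in\mathcal V\left(Q^T\right)$ the Lebesgue-measurable subdomain $Q_1^T$ satisfies $w\neq 0$ a.e.\ on $Q_1^T$ and $\infty>mes_4\left(Q_1^T\right)>0$, while $Q_1^T\subseteq(0,T)\times\Omega$. I would also note that, $\Omega$ being bounded, for any unit vector $x_0\in S_1^{R^3}(0)$ the scalar $\langle x_0,x\rangle$ runs over a bounded interval $(\alpha_1,\alpha_2)$ as $x$ ranges over $\Omega$.

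Next I would fix such an $x_0\in S_1^{R^3}(0)$, regard it as the vector $\tilde x_0=(0,x_0)\in R^1\times R^3=R^4$, and apply Lemma \ref{L_2.1} with $n=4$, $G=Q_1^T$ and generating vector $y_0=\tilde x_0$. This yields a set $I\subseteq(\alpha_1,\alpha_2)\subset R^1$ with $\infty>mes_1\left(I\right)>0$ and, for each $\alpha\in I$, a codimension-one hyperplane of $R^4$ of the form
\[
\tilde L_\alpha\equiv\{(t,x)\in R^4:\ \langle x_0,x\rangle=\alpha\}=R^1\times L_\alpha,
\]
where $L_\alpha\equiv\{x\in R^3:\ \langle x_0,x\rangle=\alpha\}$ has $co\dim_{R^3}L_\alpha=1$, such that $mes_3\left(Q_1^T\cap\tilde L_\alpha\right)>0$. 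Since $\tilde L_\alpha=R^1\times L_\alpha$ and $Q_1^T\subseteq(0,T)\times\Omega$, we have $Q_1^T\cap\tilde L_\alpha=Q_1^T\cap\left[(0,T)\times(\Omega\cap L_\alpha)\right]=Q_{L_\alpha}^T$, hence $mes_3\left(Q_{L_\alpha}^T\right)>0$ for every $\alpha\in I$; and because $Q_{L_\alpha}^T\subseteq Q_1^T$ and $w\neq 0$ a.e.\ on $Q_1^T$, this gives $u\neq v$ on $Q_{L_\alpha}^T$. The auxiliary measure bounds then follow by Fubini's theorem: since $Q_{L_\alpha}^T\subseteq(0,T)\times(\Omega\cap L_\alpha)$ has positive $mes_3$-measure, one must have $mes_2\left(\Omega\cap L_\alpha\right)>0$ (and its time-projection of positive $mes_1$-measure) for all $\alpha\in I$. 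For the "and vice versa" part I would invoke the implication $(2)\Rightarrow(1)$ of Lemma \ref{L_2.1} in $R^4$: if such a class $\{L_\alpha\}_{\alpha\in I}$ exists with $mes_1\left(I\right)>0$, $mes_2\left(\Omega\cap L_\alpha\right)>0$ and $u\neq v$ on each $Q_{L_\alpha}^T$, then $mes_4\left(\bigcup_{\alpha\in I}Q_{L_\alpha}^T\right)>0$, so $u$ and $v$ are genuinely distinct weak solutions of (\ref{2.1}) - (\ref{2.3}).

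I do not expect a real obstacle, since the whole content is transported from Lemma \ref{L_2.1}; but two points deserve care. First, the argument is purely measure-theoretic, so no trace of $w$ on the two-dimensional cross-sections $\Omega\cap L_\alpha$ is required at this stage: the sets $Q_{L_\alpha}^T$ carry the three-dimensional Lebesgue measure of the hyperplane $\tilde L_\alpha$, and Fubini is applied along the single direction $\tilde x_0$. Second, and more essential in the bookkeeping, the generating vector must be chosen in the spatial factor only — whence the embedding $x_0\mapsto(0,x_0)$ — so that the hyperplanes keep the product form $R^1\times L_\alpha$, leaving the time variable untouched and making $\Omega\cap L_\alpha$ a genuine spatial cross-section of codimension one; a direction with a nonzero time component would destroy this structure.
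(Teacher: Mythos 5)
Your proposal is correct and rests on exactly the same measure-theoretic content as the paper's proof, namely Lemma \ref{L_2.1} (a Fubini-type slicing argument); the difference is organizational. The paper slices twice: first along the time direction, obtaining a set $J$ with $mes_{1}\left( J\right) >0$ of times at which the spatial disagreement set has positive $mes_{3}$ (its (\ref{2.6})), and then applies Lemma \ref{L_2.1} again in $R^{3}$ to those spatial slices to produce the family $L_{\alpha }$; you apply Lemma \ref{L_2.1} once in $R^{4}$ to $G=Q_{1}^{T}$ with the generating vector forced into the spatial factor, so the hyperplanes keep the product form $R^{1}\times L_{\alpha }$ and project directly onto spatial cross-sections, after which the bounds $mes_{2}\left( \Omega \cap L_{\alpha }\right) >0$ follow by Fubini as you say. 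Two remarks: first, you invoke Lemma \ref{L_2.1} with a \emph{prescribed} direction, whereas its statement is only existential in $y_{0}$; this is harmless, since the proof of Lemma \ref{L_2.1} works for any fixed direction and the paper itself takes the same license when it slices along $t$, but it is worth flagging that you are using this slightly stronger form. Second, your one-step space-time slicing in fact sidesteps a point the paper glosses over — the paper asserts that a single family $\left\{ L_{\alpha }\right\} _{\alpha \in I}$ works uniformly for all $t\in J$, which is not really justified there — while your conclusion $mes_{3}\left( Q_{1}^{T}\cap \left( R^{1}\times L_{\alpha }\right) \right) >0$ for $\alpha \in I$ is exactly what Fubini yields and is all that the lemma and its later use in the proof of Theorem \ref{Th_1} require. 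Your converse direction mirrors the paper's, both applying the implication $(2)\Rightarrow (1)$ of Lemma \ref{L_2.1} to the union of the per-hyperplane disagreement sets, and it shares the same looseness as the original about the positive-measure content implicit in the phrase ``$u\neq v$ on $Q_{L_{\alpha }}^{T}$''.
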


\begin{proof}
Let problem (\ref{2.1}) - (\ref{2.3}) have two different solutions $u,v\in 
\mathcal{V}\left( Q^{T}\right) $ then there exist a subdomain of $Q^{T}$ on
which these solutions are different. Then there are $t_{1},t_{2}>0$ such
that 
\begin{equation}
mes_{3}\left( \left\{ x\in \Omega \left\vert \ \left\vert u\left( t,x\right)
-v\left( t,x\right) \right\vert >0\right. \right\} \right) >0  \label{2.6}
\end{equation}
holds for any $t\in J\subseteq \left[ t_{1},t_{2}\right] \subseteq \left[
0,T\right) $, where $mes_{1}\left( J\right) >0$ by the virtue of the
condition 
\begin{equation}
mes_{4}\left( \left\{ (t,x)\in Q^{T}\left\vert \ \left\vert
u(t,x)-v(t,x)\right\vert \right. >0\right\} \right) >0  \label{2.7}
\end{equation}%
and of Lemma \ref{L_2.1}.

Whence follows, that there exists, at least, one class of such parallel
hyperplanes $L_{\alpha }$, $\alpha \in I\subseteq \left( \alpha _{1},\alpha
_{2}\right) \subset R^{1}$ that $co\dim _{R^{3}}L_{\alpha }=1$ and 
\begin{equation*}
mes_{2}\left( \left\{ x\in \Omega \cap L_{\alpha }\left\vert \ \left\vert
u\left( t,x\right) -v\left( t,x\right) \right\vert >0\right. \right\}
\right) >0,\ \forall \alpha \in I
\end{equation*}%
hold for $\forall t\in J$, where subsets $I$ and $J$ are satisfied
inequations: $mes_{1}\left( I\right) >0$, $mes_{1}\left( J\right) >0$, and
also (\ref{2.7}) holds, by virtue of (\ref{2.6}). This proves the "if" part
of the Lemma.

Now consider the converse assertion. Let there exist a class of hyperplanes $%
L_{\alpha }$, $\alpha \in I_{1}\subseteq \left( \alpha _{1},\alpha
_{2}\right) \subset R^{1}$ with $co\dim _{R^{3}}L_{\alpha }=1$ that fulfills
the condition of the Lemma and the subset $I_{1}$\ satisfies of the same
condition as $I$. Then there exist, at least, one subset $J_{1}$ of $\left[
0,T\right) $ such that $mes_{1}\left( J_{1}\right) >0$ and the inequation $%
u\left( t,x\right) \neq v\left( t,x\right) $ holds onto $Q_{2}^{T}$ with $%
mes_{4}\left( Q_{2}^{T}\right) >0$, which is defined as $Q_{2}^{T}\equiv
J_{1}\times U_{L}$, where 
\begin{equation}
U_{L}\equiv \underset{\alpha \in I_{1}}{\bigcup }\left\{ x\in \Omega \cap
L_{\alpha }\left\vert \ u\left( t,x\right) \neq v\left( t,x\right) \right.
\right\} \subset \Omega ,\ t\in J_{1}  \label{2.8}
\end{equation}%
for which the inequation $mes_{R^{3}}\left( U_{L}\right) >0$ is fulfilled by
the condition and of Lemma \ref{L_2.1}.

So, we get 
\begin{equation*}
u\left( t,x\right) \neq v\left( t,x\right) \text{ onto }Q_{2}^{T}\equiv
J_{1}\times U_{L},\text{ with }mes_{4}\left( Q_{2}^{T}\right) >0.
\end{equation*}%
Consequently, we obtain the fact that $u\left( t,x\right) $ and $v\left(
t,x\right) $ are different functions in $\mathcal{V}\left( Q^{T}\right) $.
\end{proof}

It is not difficult to see that the result of Lemma \ref{L_2.2} is
independent of the assumption: $Q_{1}^{T}\subset Q^{T}$ or $Q_{1}^{T}=Q^{T}$.

\section{\label{Sec_3} Definition and property of the auxiliary problem}

To study the posed question in this section, we will transform the problem (%
\ref{2.1}) - (\ref{2.3}) posed on $Q^{T}$ to the auxiliary problems that are
formulated on the cross-sections of the domain $Q^{T}\equiv \left(
0,T\right) \times \Omega $. In this case, we assume the following
complementary condition on the given functions that $u_{0}\in H^{\frac{1}{2}%
}\left( \Omega \right) $, $f\in L^{2}\left( 0,T;H^{\frac{1}{2}}\left( \Omega
\right) \right) $.

Since $\Omega \subset R^{3}$ is the open bounded domain from the class $%
Lip_{loc}$ then each point $x_{j}\in \partial \Omega $, has such open
neighborhood $U_{j}$ that $U_{j}^{\prime }=\overline{\Omega }\cap U_{j}$
that is from the class $Lip$. Consequently,\textit{\ }for every
"cross-section", $\Omega _{L}\equiv \Omega \cap L\neq \varnothing $\ of $%
%TCIMACRO{\U{3a9} }%
%BeginExpansion
\Omega
%EndExpansion
$\ with arbitrary hyperplane $L$\ exists, at least, one coordinate subspace (%
$\left( x_{j},x_{k}\right) $),\ which possesses a domain that we denote as $%
P_{x_{i}}\Omega _{L}$\ (or union of domains) from the class $Lip_{loc}$,
since $\partial \Omega _{L}\equiv \partial \Omega \cap L\neq \varnothing $
and defined\ with the affine representation by $\Omega _{L}$,\textit{\ }in
addition, isomorphic to $\partial P_{x_{i}}\Omega _{L}$, i.e. $\partial
\Omega _{L}\Longleftrightarrow \partial P_{x_{i}}\Omega _{L}$.

Thus, using of the representation $P_{x_{i}}L$ of the hyperplane $L$ we get
that $\Omega _{L}$ can be written in the form $P_{x_{i}}\Omega _{L}$,
therefore, an integral on $\Omega _{L}$ also will be defined by the
respective representation, i.e. as the integral on $P_{x_{i}}\Omega _{L}$.

It should be noted that $\Omega _{L}$ can consist of many parts, then $%
P_{x_{i}}\Omega _{L}$ will be such as $\Omega _{L}$. In this case, $\Omega
_{L}$ will be as the union of domains, then the following relation will be
held 
\begin{equation*}
\Omega _{L}=\underset{r=1}{\overset{m}{\cup }}\Omega _{L}^{r}\
\Longleftarrow \Longrightarrow \ P_{x_{i}}\Omega _{L}=\underset{r=1}{\overset%
{m}{\cup }}P_{x_{i}}\Omega _{L}^{r},\quad \infty >m\geq 1.
\end{equation*}%
Therefore, each of $P_{x_{i}}\Omega _{L}^{r}$ will be the domain and these
can be investigated separately, as $\Omega _{L}^{r}\subset \Omega $ and $%
\partial \Omega _{L}^{r}\subset \partial \Omega $.

So, we will define subdomains of $Q^{T}\equiv \left( 0,T\right) \times
\Omega $ as follows $Q_{L}^{T}\equiv \left( 0,T\right) \times \left( \Omega
\cap L\right) $, where $L$ is an arbitrarily fixed hyperplane of the
dimension two and $\Omega \cap L\neq \varnothing $.

Consequently, we will investigate the uniqueness of the problem (\ref{2.1})
- (\ref{2.3}) on the "cross-section" $Q_{L}^{T}$ defined according to the
"cross-section" of $\Omega $. Denote by $\Omega _{L}$ of the "cross-section" 
$\Omega _{L}\equiv \Omega \cap L\neq \varnothing $, $mes_{R^{2}}\left(
\Omega _{L}\right) >0$ (e.g. $L$ can be $L\equiv \left\{ \left(
x_{1},x_{2},0\right) \left\vert \ x_{1},x_{2}\in R^{1}\right. \right\} $).
In other words, we can determine $L$ as 
\begin{equation*}
L\equiv \left\{ x\in R^{3}\left\vert \ \left\langle a,x\right\rangle
=a_{1}x_{1}+a_{2}x_{2}+a_{3}x_{3}=b\right. \right\} ,
\end{equation*}%
where $a\in S_{1}^{R^{3}}\left( 0\right) $ and $b\in R^{1}$ are arbitrary
fixed, furthermore, each $a\in S_{1}^{R^{3}}\left( 0\right) $ and $b\in
R^{1} $ define of single $L_{b}\left( a\right) $ and vice versa. Whence
follows that $a_{3}x_{3}=b-a_{1}x_{1}-a_{2}x_{2}$, if assume $a_{3}\neq 0$
then $x_{3}=\frac{1}{a_{3}}\left( b-a_{1}x_{1}-a_{2}x_{2}\right) $, by
renaming coefficients we obtain $x_{3}\equiv \psi _{3}\left(
x_{1},x_{2}\right) =b-a_{1}x_{1}-a_{2}x_{2}$.

Since we will investigate the problem (\ref{2.1}) - (\ref{2.3}) on $%
Q_{L}^{T} $, in the beginning, we need to define the problem deriving under
changing of the domain, on which was formulated the problem. Clearly, under
this projection, some of the expressions in the problem (\ref{2.1}) - (\ref%
{2.3}) will be changed according to the above relation between the
independent variables. Let $L$ be an arbitrary hyperplane intersecting with $%
\Omega $, i.e. $\Omega _{L}\neq \varnothing $ and $u\in \mathcal{V}\left(
Q^{T}\right) $ is the weak solution of the problem (\ref{2.1}) - (\ref{2.3}%
). We will assume functions $u_{0}$ and $f$ satisfy conditions of the
Theorem \ref{Th_1}, namely, $u_{0}\in H^{1/2}\left( \Omega \right) $ and $%
f\in L^{2}\left( 0,T;H^{1/2}\left( \Omega \right) \right) $ so that these
functions to be correctly defined on $\left( 0,T\right] \times \Omega _{L}$.

Now we will provide some reasoning and calculations to formulate the problem
generated by the "projection" of the problem (\ref{2.1}) - (\ref{2.3}) onto $%
\left( 0,T\right] \times \Omega _{L}$. Thus, we will derive expressions 
\begin{equation}
2D_{3}\equiv \frac{\partial x_{1}}{\partial x_{3}}D_{1}+\frac{\partial x_{2}%
}{\partial x_{3}}D_{2}=-\frac{1}{a_{1}}D_{1}-\frac{1}{a_{2}}D_{2}\quad \&
\label{3.1}
\end{equation}%
\begin{equation}
4D_{3}^{2}=\frac{1}{a_{1}^{2}}D_{1}^{2}+\frac{1}{a_{2}^{2}}D_{2}^{2}+\frac{2%
}{a_{1}a_{2}}D_{1}D_{2},\quad D_{i}=\frac{\partial }{\partial x_{i}},i=1,2,3,
\label{3.2}
\end{equation}%
according to the above-mentioned reasoning.

As the function $u$ belong to $\mathcal{V}\left( Q^{T}\right) $, then it $u$
is well-defined on $\left( 0,T\right] \times \Omega _{L}$, and we obtain the
following equation on $\left( 0,T\right] \times \Omega _{L}$ 
\begin{equation*}
\frac{\partial u}{\partial t}-\nu \Delta u+\underset{j=1}{\overset{3}{\sum }}%
u_{j}D_{j}u\Longrightarrow \frac{\partial u_{L}}{\partial t}-\nu \left(
D_{1}^{2}+D_{2}^{2}+D_{3}^{2}\right) u_{L}+
\end{equation*}%
\begin{equation*}
u_{L1}D_{1}u_{L}+u_{L2}D_{2}u_{L}+u_{L3}D_{3}u_{L}=\frac{\partial u_{L}}{%
\partial t}-\nu \lbrack D_{1}^{2}+D_{2}^{2}+\frac{1}{4}%
(a_{1}^{-1}D_{1}+a_{2}^{-1}D_{2})^{2}]u_{L}
\end{equation*}%
\begin{equation*}
+u_{L1}D_{1}u_{L}+u_{L2}D_{2}u_{L}-\frac{1}{2}%
u_{L3}(a_{1}^{-1}D_{1}+a_{2}^{-1}D_{2})u_{L}=f_{L}
\end{equation*}%
Hence, by introducing the notations 
\begin{equation*}
A_{1}\equiv \lbrack D_{1}^{2}+D_{2}^{2}+\frac{1}{4}%
(a_{1}^{-1}D_{1}+a_{2}^{-1}D_{2})^{2}]
\end{equation*}%
and 
\begin{equation*}
B_{1}\left( u_{L},w_{L}\right) \equiv \left[ u_{L1}D_{1}+u_{L2}D_{2}-\frac{1%
}{2}u_{L3}\left( a_{1}^{-1}D_{1}+a_{2}^{-1}D_{2}\right) \right] w_{L}
\end{equation*}%
we get the equation 
\begin{equation}
\frac{\partial u_{L}}{\partial t}-\nu A_{1}u_{L}+B_{1}\left(
u_{L},u_{L}\right) =f_{L}  \label{3.3}
\end{equation}%
on $\left( 0,T\right) \times \Omega _{L}$. Similarly is obtained 
\begin{equation}
\func{div}u_{L}=D_{1}u_{L1}+D_{2}u_{L2}-\frac{1}{2}\left(
a_{1}^{-1}D_{1}+a_{2}^{-1}D_{2}\right) u_{L3}=0,\quad x\in \Omega _{L},\ t>0
\label{3.4}
\end{equation}%
\begin{equation}
u_{L}\left( 0,x\right) =u_{L0}\left( x\right) ,\quad \left( t,x\right) \in 
\left[ 0,T\right] \times \Omega _{L};\quad u_{L}\left\vert \ _{\left(
0,T\right) \times \partial \Omega _{L}}\right. =0.  \label{3.5}
\end{equation}%
according to the representations (\ref{3.1}) and (\ref{3.2}).

Thus, we derived the problem (\ref{3.3}) - (\ref{3.5}), which will give us
the possibility to define properties of the weak solution $u$ to the problem
(\ref{2.1}) - (\ref{2.3}) on each "cross-section" $\left[ 0,T\right) \times
\Omega _{L}\equiv Q_{L}^{T}$. \ 

So, we will investigate the problem (\ref{3.3}) - (\ref{3.5}). It needs
noting that for each hyperplane $L\subset R^{3}$, there exists one, at
least, such $2-$dimensional subspace in the coordinate system of which the $%
L $ can determine as $\left( x_{i},x_{j}\right) $ and $P_{x_{k}}L=R^{2}$
(e.g. $i,j,k=1,2,3$), i.e. 
\begin{equation*}
L\equiv \left\{ x\in R^{3}\left\vert \ x=\left( x_{i},x_{j},\psi _{L}\left(
x_{i},x_{j}\right) \right) ,\right. \left( x_{i},x_{j}\right) \in
R^{2}\right\}
\end{equation*}%
and 
\begin{equation*}
\Omega \cap L\equiv \left\{ x\in \Omega \left\vert \ x=\left(
x_{i},x_{j},\psi _{L}\left( x_{i},x_{j}\right) \right) ,\right. \left(
x_{i},x_{j}\right) \in P_{x_{k}}\left( \Omega \cap L\right) \right\}
\end{equation*}%
hold, where $\psi _{L}$ is the affine function (is bijection).

Thereby, in this case functions $u(t,x),\ f(t,x)$ and$\ u_{0}(x)$ can be
represented as 
\begin{equation*}
u(t,x_{i},x_{j},\psi _{L}(x_{i},x_{j}))\equiv v(t,x_{i},x_{j})\text{, }%
f(t,x_{i},x_{j},\psi _{L}(x_{i},x_{j})\equiv \phi (x_{i},x_{j})
\end{equation*}%
and%
\begin{equation*}
u_{0}(x_{i},x_{j},\psi _{L}(x_{i},x_{j}))\equiv v_{0}(x_{i},x_{j})\text{ \ \
on }(0,T)\times P_{x_{k}}\Omega _{L},
\end{equation*}%
respectively.

So, in the problem (\ref{3.3}) - (\ref{3.5}), each of the functions can be
represented as functions from the independent variables: $t$, $x_{i}$, and $%
x_{j}$.

It is known that the Dirichlet to Neumann map is single-value mapping if the
homogeneous Dirichlet problem for the elliptic part of the equation has only
a trivial solution, i.e. zero not is an eigenvalue of this problem (see,
e.g. \cite{Nac}, \cite{BehEl}, \cite{DePrZac}, \cite{H-DR}, \cite{BelCho},
etc.). Therefore, we will show that for the auxiliary problems, this
condition is satisfied.

\begin{proposition}
\label{Pr_4.1}The homogeneous Dirichlet problem for the elliptic part of the
problem (\ref{3.3}) - (\ref{3.5}) has only a trivial solution.
\end{proposition}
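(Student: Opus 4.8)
The plan is to expand the squared first-order operator in $A_1$ and thereby recognize the elliptic operator $-\nu A_1$ appearing in (\ref{3.3}) as a constant-coefficient, uniformly elliptic second-order operator acting componentwise, after which the homogeneous Dirichlet problem is dispatched by the standard energy identity. Concretely, setting $v=(a_1^{-1},a_2^{-1})\in R^{2}$, one has
\begin{equation*}
A_1=D_1^2+D_2^2+\tfrac14\bigl(a_1^{-1}D_1+a_2^{-1}D_2\bigr)^2=\sum_{i,j=1}^{2}M_{ij}D_iD_j,\qquad M=I_2+\tfrac14\,vv^{\top},
\end{equation*}
so that $\langle M\xi,\xi\rangle=|\xi|^2+\tfrac14\langle v,\xi\rangle^2\ge|\xi|^2$ for every $\xi\in R^{2}$. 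Here $a_1\neq0$ and $a_2\neq0$ by the very construction of the cross-section (these coefficients enter only through $a_1^{-1}$ and $a_2^{-1}$), hence $M$ is well defined, symmetric and positive definite, and $A_1$ is uniformly elliptic on $\Omega_L$, equivalently on its affine representation $P_{x_k}\Omega_L$, which by the discussion of Section~\ref{Sec_3} is a bounded Lipschitz domain (or a finite union $\bigcup_{r}P_{x_k}\Omega_L^{r}$ of such). In particular the integrations by parts used below are legitimate, and since $\nu>0$ the homogeneous Dirichlet problem for the elliptic part is the same as $A_1\phi=0$ on $\Omega_L$ together with $\phi|_{\partial\Omega_L}=0$.

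Next I would let $\phi\in\bigl(W_0^{1,2}(\Omega_L)\bigr)^3$ be a weak solution of that problem, i.e. $\int_{\Omega_L}\langle M\nabla\phi,\nabla\psi\rangle\,dx=0$ for all admissible $\psi$ (the scalar operator $A_1$ acts on each component of $\phi$ independently, so one may as well argue for scalar $\phi$). Taking $\psi=\phi$ gives $\int_{\Omega_L}\langle M\nabla\phi,\nabla\phi\rangle\,dx=0$, and the lower bound $\langle M\xi,\xi\rangle\ge|\xi|^2$ forces $\int_{\Omega_L}|\nabla\phi|^2\,dx=0$, i.e. $\nabla\phi=0$ a.e.; the Poincar\'e inequality for $W_0^{1,2}$ on the bounded domain $\Omega_L$ then yields $\phi\equiv0$. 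An equivalent and perhaps cleaner route, worth recording as a remark, is to diagonalize: with $B=M^{1/2}$ the symmetric positive square root of $M$, the substitution $x=By$ turns $A_1$ into the plane Laplacian on the transformed---still bounded, still Lipschitz---domain, so the claim reduces to the classical uniqueness for the Dirichlet problem of $\Delta$.

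I do not expect a genuine obstacle. The one step deserving a line of care is the ellipticity, that is the positive-definiteness of $M$, and this is immediate from the identity $M=I_2+\tfrac14\,vv^{\top}$, which displays $M$ as the identity plus a rank-one positive semidefinite matrix, together with the already-noted fact that $a_1,a_2\neq0$ is part of the very definition of the projected operator $A_1$. The remaining issue is mere bookkeeping: $\Omega_L$, hence $P_{x_k}\Omega_L$, may be disconnected, in which case one repeats the argument on each of the finitely many subdomains $P_{x_k}\Omega_L^{r}$ separately---as was already arranged in Section~\ref{Sec_3}---the Poincar\'e step applying to each in turn.
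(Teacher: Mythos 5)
Your argument for the linear operator $-\nu A_{1}$ is correct and, in that part, essentially coincides with the paper's own energy computation: you write $A_{1}=\sum_{i,j}M_{ij}D_{i}D_{j}$ with $M=I_{2}+\tfrac14\,vv^{\top}\geq I_{2}$, test with the solution, and conclude via Poincar\'e; the paper obtains the same quadratic form $\nu\sum_{i}\int\bigl\{|D_{1}u_{Li}|^{2}+|D_{2}u_{Li}|^{2}+\tfrac14[(a_{1}^{-1}D_{1}+a_{2}^{-1}D_{2})u_{Li}]^{2}\bigr\}\,dx_{1}dx_{2}$ and in fact your passage through Poincar\'e is the cleaner way to finish (the paper merely asserts a strict inequality and invokes a contradiction).

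There is, however, a gap relative to what the paper actually proves. In the paper's proof, ``the elliptic part of the problem (\ref{3.3})--(\ref{3.5})'' is the full stationary operator, i.e. the problem $-\nu A_{1}u_{L}+B_{1}\left( u_{L},u_{L}\right)=0$ in $\Omega_{L}$ with $u_{L}|_{\partial\Omega_{L}}=0$, so the nonlinear convection term $B_{1}$ is part of the statement. You silently discard it by asserting that the problem ``is the same as $A_{1}\phi=0$,'' which is only true for the linear principal part. The missing step is precisely the one the paper supplies: testing with $u_{L}$ and integrating by parts, one finds
\begin{equation*}
\left\langle B_{1}\left( u_{L},u_{L}\right),u_{L}\right\rangle _{\Omega_{L}}
=\frac{1}{2}\underset{i=1}{\overset{3}{\sum }}\underset{P_{x_{3}}\Omega _{L}}{\int }
\left\vert u_{Li}\right\vert ^{2}\left[ D_{1}u_{L1}+D_{2}u_{L2}-\frac{1}{2}\left(a_{1}^{-1}D_{1}+a_{2}^{-1}D_{2}\right) u_{L3}\right] dx_{1}dx_{2}=0,
\end{equation*}
which vanishes by the projected incompressibility condition (\ref{3.4}) and the homogeneous boundary condition. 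Once this is added, the nonlinear term drops out of the energy identity and your coercivity-plus-Poincar\'e argument applies verbatim, so the defect is easily repaired; but as written your proof establishes only the statement for $-\nu A_{1}$ alone, not for the stationary problem the paper's proposition (as its proof construes it) addresses.
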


\begin{proof}
If to consider the elliptic part of the problem (3.3) - (3.5) then we get\
the problem 
\begin{equation*}
-\nu A_{1}u_{L}+B_{1}\left( u_{L},u_{L}\right) \equiv -\nu \left[ \left(
D_{1}^{2}+D_{2}^{2}\right) +\frac{1}{4}\left(
a_{1}^{-1}D_{1}+a_{2}^{-1}D_{2}\right) ^{2}\right] u_{L}+\underset{j=1}{%
\overset{2}{\sum }}u_{j}D_{j}u_{L}
\end{equation*}%
\begin{equation*}
-\frac{1}{2}\left(
a_{1}^{-1}u_{L3}D_{1}u_{L}+a_{2}^{-1}u_{L3}D_{2}u_{L}\right) =0,\ x\in
\Omega _{L},\quad u_{L}\left\vert _{\ \partial \Omega _{L}}\right. =0,
\end{equation*}%
where $\Omega _{L}=\Omega \cap L$.

Let's show that this problem cannot have nontrivial solutions. This will be
proved using the method of contradiction. Let $u_{L}\in V\left( \Omega
_{L}\right) $ be the nontrivial solution of this problem then we get the
following inequation 
\begin{equation*}
0=\left\langle -\nu A_{1}u_{L}+B_{1}\left( u_{L},u_{L}\right)
,u_{L}\right\rangle _{P_{x_{3}}\Omega _{L}}=
\end{equation*}%
hence 
\begin{equation*}
=-\underset{i=1}{\overset{3}{\nu \sum }}\left\langle \left[
D_{1}^{2}+D_{2}^{2}+\frac{1}{4}\left( a_{1}^{-1}D_{1}+a_{2}^{-1}D_{2}\right)
^{2}\right] u_{Li},u_{Li}\right\rangle _{P_{x_{3}}\Omega _{L}}+
\end{equation*}%
\begin{equation*}
+\underset{i=1}{\overset{3}{\sum }}\underset{P_{x_{3}}\Omega _{L}}{\int }%
\left[ u_{L1}D_{1}+u_{L2}D_{2}-\frac{1}{2}u_{L3}\left(
a_{1}^{-1}D_{1}+a_{2}^{-1}D_{2}\right) \right] u_{Li},u_{Li}dx_{1}dx_{2}=
\end{equation*}%
\begin{equation*}
\underset{i=1}{=\overset{3}{\nu \sum }}\underset{P_{x_{3}}\Omega _{L}}{\int }%
\left[ \left\vert D_{1}u_{Li}\right\vert ^{2}+\left\vert
D_{2}u_{Li}\right\vert ^{2}\right] dx_{1}dx_{2}+\frac{1}{4}\left[ \left(
a_{1}^{-1}D_{1}+a_{2}^{-1}D_{2}\right) u_{Li}\right] ^{2}+
\end{equation*}%
\begin{equation*}
+\frac{1}{2}\underset{i=1}{\overset{3}{\sum }}\underset{P_{x_{3}}\Omega _{L}}%
{\int }\left[ D_{1}u_{L1}+D_{2}u_{L2}-\frac{1}{2}\left(
a_{1}^{-1}D_{1}+a_{2}^{-1}D_{2}\right) u_{L3}\right] \left\vert
u_{Li}\right\vert ^{2}dx_{1}dx_{2}=
\end{equation*}%
by (\ref{3.4}) 
\begin{equation*}
\underset{i=1}{=\overset{3}{\nu \sum }}\underset{P_{x_{3}}\Omega _{L}}{\int }%
\left\{ \left\vert D_{1}u_{Li}\right\vert ^{2}+\left\vert
D_{2}u_{Li}\right\vert ^{2}+\frac{1}{4}\left[ \left(
a_{1}^{-1}D_{1}+a_{2}^{-1}D_{2}\right) u_{Li}\right] ^{2}\right\}
dx_{1}dx_{2}+
\end{equation*}%
\begin{equation*}
+\frac{1}{2}\underset{i=1}{\overset{3}{\sum }}\underset{P_{x_{3}}\Omega _{L}}%
{\int }\left\vert u_{Li}\right\vert ^{2}\func{div}u_{L}dx_{1}dx_{2}=
\end{equation*}%
\begin{equation}
\underset{i=1}{\overset{3}{\nu \sum }}\underset{P_{x_{3}}\Omega _{L}}{\int }%
\left\{ \left\vert D_{1}u_{Li}\right\vert ^{2}+\left\vert
D_{2}u_{Li}\right\vert ^{2}+\frac{1}{4}\left[ \left(
a_{1}^{-1}D_{1}+a_{2}^{-1}D_{2}\right) u_{Li}\right] ^{2}\right\}
dx_{1}dx_{2}>0.  \label{3.6a}
\end{equation}%
Thus, the obtained contradiction shows that function $u_{L}$ needs to be
zero, i.e. $u_{L}=0$ holds.
\end{proof}

Consequently, the considered problem satisfies sufficient conditions, so
that to be the Dirichlet to Neumann map. It is well-known that operator $%
-\Delta :H_{0}^{1}\left( \Omega _{L}\right) \longrightarrow $ $H^{-1}\left(
\Omega _{L}\right) $ generates of the $C_{0}-$ semigroup on $H\left( \Omega
_{L}\right) $, and since inclusion $H_{0}^{1}\left( \Omega _{L}\right)
\subset H^{-1}\left( \Omega _{L}\right) $ is compact, therefore, $\left(
-\Delta \right) ^{-1}$ is the compact operator in $H^{-1}\left( \Omega
_{L}\right) $.

\section{\label{Sec_.4}Existence of Weak Solution of Problem (\protect\ref%
{3.3}) - (\protect\ref{3.5})}

So, assume the domain $\Omega \subset R^{3}$ is such as above, functions $%
u_{0}$ and $f$ satisfy the conditions 
\begin{equation*}
u_{0}\in H^{1/2}\left( \Omega \right) ,\quad f\in L^{2}\left(
0,T;H^{1/2}\left( \Omega \right) \right) ,
\end{equation*}%
then these are defined on $\Omega _{L}$, $Q_{L}^{T}$ correctly, and belong
to $H\left( \Omega _{L}\right) $, $L^{2}\left( 0,T;H\left( \Omega
_{L}\right) \right) $, respectively. The spaces $V\left( \Omega _{L}\right) $
and $H\left( \Omega _{L}\right) $ are determined using the above reasoning.
Consequently, one can determine of space $V\left( Q_{L}^{T}\right) $ as
follow%
\begin{equation*}
V\left( Q_{L}^{T}\right) \equiv L^{2}\left( 0,T;V\left( \Omega _{L}\right)
\right) \cap L^{\infty }\left( 0,T;H\left( \Omega _{L}\right) \right) .
\end{equation*}

Thus we will study the problem (\ref{3.3}) - (\ref{3.5}), where $f_{L}\in
L^{2}\left( 0,T;V^{\ast }\left( \Omega _{L}\right) \right) $ and $u_{0L}\in
H\left( \Omega _{L}\right) $. Therefore, a weak solution to the problem (\ref%
{3.3}) - (\ref{3.5}) will be understood as the following: A function $%
u_{L}\in \mathcal{V}\left( Q_{L}^{T}\right) $ is called a weak solution to
the problem (\ref{3.3}) - (\ref{3.5}) if it satisfies the equality 
\begin{equation}
\frac{d}{dt}\left\langle u_{L},v\right\rangle _{\Omega _{L}}-\nu
\left\langle A_{1}u_{L},v\right\rangle _{\Omega _{L}}+\left\langle
B_{1}\left( u_{L},u_{L}\right) ,v\right\rangle _{\Omega _{L}}=\left\langle
f_{L},v\right\rangle _{\Omega _{L}},  \label{3.6}
\end{equation}%
for any $v\in V\left( \Omega _{L}\right) $ and almost everywhere in $\left(
0,T\right) $, and the initial condition 
\begin{equation}
\left\langle u_{L}\left( t\right) ,v\right\rangle \left\vert _{t=0}\right.
=\left\langle u_{0L},v\right\rangle ,\quad \forall v\in H\left( \Omega
_{L}\right) ,  \label{3.6b}
\end{equation}%
where $\left\langle \circ ,\circ \right\rangle _{\Omega _{L}}$ is the dual
form for the pair of spaces $\left( V\left( \Omega _{L}\right) ,V^{\ast
}\left( \Omega _{L}\right) \right) $. Here the space $\mathcal{V}\left(
Q_{L}^{T}\right) $ is defined as 
\begin{equation*}
\mathcal{V}\left( Q_{L}^{T}\right) \equiv \left\{ w\left\vert \ w\in V\left(
Q_{L}^{T}\right) ,\ w^{\prime }\in L^{2}\left( 0,T;V^{\ast }\left( \Omega
_{L}\right) \right) \right. \right\} .
\end{equation*}

So, to obtain the a priori estimates for possible weak solutions to the
problem, we will apply the usual approach. Then from (\ref{3.6}), we get 
\begin{equation}
\left\langle \frac{d}{dt}u_{L},u_{L}\right\rangle _{\Omega _{L}}-\nu
\left\langle A_{1}u_{L},u_{L}\right\rangle _{\Omega _{L}}+\left\langle
B_{1}\left( u_{L},u_{L}\right) ,u_{L}\right\rangle _{\Omega
_{L}}=\left\langle f_{L},u_{L}\right\rangle _{\Omega _{L}}.  \label{3.7}
\end{equation}%
Thence, by making the known calculations, taking into account the condition
on $\Omega _{L}$ and (\ref{3.4}), and also of calculations (\ref{3.1}) that
carried out in the previous section, we derive 
\begin{equation*}
\frac{1}{2}\frac{d}{dt}\left\Vert u_{L}\right\Vert _{H\left( \Omega
_{L}\right) }^{2}+\nu \left\Vert D_{1}u_{L}\right\Vert _{H\left( \Omega
_{L}\right) }^{2}+\left\Vert D_{2}u_{L}\right\Vert _{H\left( \Omega
_{L}\right) }^{2}+\frac{1}{4}\left\Vert \left(
a_{1}^{-1}D_{1}+a_{2}^{-1}D_{2}\right) u_{L}\right\Vert _{H\left( \Omega
_{L}\right) }^{2}+
\end{equation*}%
\begin{equation}
+\underset{i=1}{\overset{3}{\sum }}\underset{P_{x_{3}}\Omega _{L}}{\int }%
\left[ u_{L1}D_{1}+u_{L2}D_{2}-\frac{1}{2}u_{L3}\left(
a_{1}^{-1}D_{1}+a_{2}^{-1}D_{2}\right) \right] u_{Li},u_{Li}dx_{1}dx_{2}=%
\left\langle f_{L},u_{L}\right\rangle _{\Omega _{L}},  \label{3.8}
\end{equation}%
where $\left\langle g,h\right\rangle _{\Omega _{L}}=\underset{i=1}{\overset{3%
}{\sum }}\underset{P_{x_{3}}\Omega _{L}}{\int }g_{i}h_{i}dx_{1}dx_{2}$ for
any $g,h\in H\left( \Omega _{L}\right) $, or $g\in \left( W^{1,2}\left(
\Omega _{L}\right) \right) ^{3}$ and $h\in \left( W^{-1,2}\left( \Omega
_{L}\right) \right) ^{3}$, respectively.

So, providing similar calculations of such a type as were brought in (\ref%
{3.6a}), from equation (\ref{3.8}) we get the equation 
\begin{equation*}
\frac{1}{2}\frac{d}{dt}\left\Vert u_{L}\right\Vert _{H\left( \Omega
_{L}\right) }^{2}+\nu \left\Vert D_{1}u_{L}\right\Vert _{H\left( \Omega
_{L}\right) }^{2}+\left\Vert D_{2}u_{L}\right\Vert _{H\left( \Omega
_{L}\right) }^{2}+
\end{equation*}%
\begin{equation}
+\frac{\nu }{4}\left\Vert \left( a_{1}^{-1}D_{1}+a_{2}^{-1}D_{2}\right)
u_{L}\right\Vert _{H\left( \Omega _{L}\right) }^{2}=\left\langle
f_{L},u_{L}\right\rangle _{\Omega _{L}}  \label{3.9}
\end{equation}

Then, from the (\ref{3.8}), in view to (\ref{3.9}) and to (\ref{3.6b}), is
derived the following inequality 
\begin{equation*}
\frac{1}{2}\left\Vert u_{L}\right\Vert _{H\left( \Omega _{L}\right)
}^{2}\left( t\right) +\nu \underset{0}{\overset{t}{\tint }}\left\{ \underset{%
i=1}{\overset{3}{\sum }}\underset{P_{x_{3}}\Omega _{L}}{\int }\left[ \left(
D_{1}u_{Li}\right) ^{2}+\left( D_{2}u_{Li}\right) ^{2}\right] \right\}
dx_{1}dx_{2}ds+
\end{equation*}%
\begin{equation*}
+\frac{\nu }{4}\underset{0}{\overset{t}{\tint }}\left\{ \underset{i=1}{%
\overset{3}{\sum }}\underset{P_{x_{3}}\Omega _{L}}{\int }\left[ \left(
a_{1}^{-1}D_{1}+a_{2}^{-1}D_{2}\right) u_{L}\right] ^{2}dx_{1}dx_{2}\right\}
ds\leq
\end{equation*}%
\begin{equation}
\leq \underset{0}{\overset{t}{\tint }}\left\{ \underset{P_{x_{3}}\Omega _{L}}%
{\int }\left\vert \left( f_{L}\cdot u_{L}\right) \right\vert
dx_{1}dx_{2}\right\} ds+\frac{1}{2}\left\Vert u_{L0}\right\Vert _{H\left(
\Omega _{L}\right) }^{2},  \label{3.10}
\end{equation}%
which gives us the following a priori estimates 
\begin{equation}
\left\Vert u_{L}\right\Vert _{H\left( \Omega _{L}\right) }\left( t\right)
\leq C\left( f_{L},u_{L0},mes\Omega \right) ,  \label{3.11}
\end{equation}%
\begin{equation}
\left\Vert D_{1}u_{L}\right\Vert _{L_{2}\left( 0,T;H\left( \Omega
_{L}\right) \right) }+\left\Vert D_{2}u_{L}\right\Vert _{L_{2}\left(
0,T;H\left( \Omega _{L}\right) \right) }\leq C\left( f_{L},u_{L0},mes\Omega
\right) ,  \label{3.12}
\end{equation}%
where $C\left( f_{L},u_{L0},mes\Omega \right) >0$ is the independent of $%
u_{L}$ constant. Consequently, any possible solution to this problem belongs
to a bounded subset of the space $V\left( Q_{L}^{T}\right) $.

Boundedness of the trilinear form $\left( B_{1}\left( u_{L},u_{L}\right)
,v\right) $ from (\ref{3.7}) follows from the next result.

\begin{proposition}
\label{P_3.1}Let $u_{L}\in V\left( Q_{L}^{T}\right) $, $v\in V\left( \Omega
_{L}\right) $ and $B_{1}$ is the operator defined by 
\begin{equation*}
\left\langle B_{1}\left( u_{L},u_{L}\right) ,v\right\rangle _{\Omega
_{L}}=b_{L}\left( u_{L},u_{L},v\right) =
\end{equation*}%
\begin{equation*}
=\underset{i=1}{\overset{3}{\sum }}\underset{P_{x_{3}}\Omega _{L}}{\int }%
\left[ u_{L1}D_{1}+u_{L2}D_{2}-\frac{1}{2}u_{L3}\left(
a_{1}^{-1}D_{1}+a_{2}^{-1}D_{2}\right) \right] u_{Li},v_{Li}dx_{1}dx_{2}
\end{equation*}%
then $B_{1}\left( u_{L},u_{L}\right) $ belongs to bounded subset of $%
L^{2}\left( 0,T;V^{\ast }\left( \Omega _{L}\right) \right) $.
\end{proposition}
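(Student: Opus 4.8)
The plan is to estimate the trilinear form $b_{L}(u_{L},u_{L},v)$ pointwise in $t$ by standard Sobolev/interpolation inequalities on the two–dimensional cross–section $P_{x_{3}}\Omega_{L}$, and then integrate in $t$ using the a priori bounds \eqref{3.11}–\eqref{3.12}. First I would observe that $B_{1}(u_{L},u_{L})$ is, up to the fixed constants $a_{1}^{-1},a_{2}^{-1}$ coming from the affine representation of $L$, a finite linear combination of terms of the classical convective type $u_{Lk}\,D_{m}u_{Li}$ with $k,m\in\{1,2\}$ and $i\in\{1,2,3\}$; hence it suffices to bound
\begin{equation*}
\bigl|\langle B_{1}(u_{L},u_{L}),v\rangle_{\Omega_{L}}\bigr|
\;\le\; C\,\sum_{i,k,m}\;\int_{P_{x_{3}}\Omega_{L}}|u_{Lk}|\,|D_{m}u_{Li}|\,|v_{Li}|\,dx_{1}dx_{2},
\end{equation*}
with $C$ depending only on $a_{1},a_{2}$.

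Next I would apply Hölder's inequality with exponents $(4,2,4)$ and then the two–dimensional Ladyzhenskaya/Gagliardo–Nirenberg inequality $\|w\|_{L^{4}(P_{x_{3}}\Omega_{L})}\le c\,\|w\|_{L^{2}}^{1/2}\|w\|_{W^{1,2}}^{1/2}$, valid on the $Lip_{loc}$ planar domain $P_{x_{3}}\Omega_{L}$, to obtain the pointwise (in $t$) bound
\begin{equation*}
\bigl|b_{L}(u_{L},u_{L},v)\bigr|
\;\le\; C\,\|u_{L}\|_{H(\Omega_{L})}^{1/2}\,\|u_{L}\|_{V(\Omega_{L})}^{1/2}\,\|\nabla u_{L}\|_{H(\Omega_{L})}\,\|v\|_{V(\Omega_{L})}^{1/2}\,\|v\|_{H(\Omega_{L})}^{1/2}.
\end{equation*}
Taking the supremum over $v\in V(\Omega_{L})$ with $\|v\|_{V(\Omega_{L})}\le 1$ gives, since $\|v\|_{H}\le c\|v\|_{V}$,
\begin{equation*}
\|B_{1}(u_{L},u_{L})(t)\|_{V^{\ast}(\Omega_{L})}
\;\le\; C\,\|u_{L}(t)\|_{H(\Omega_{L})}^{1/2}\,\|u_{L}(t)\|_{V(\Omega_{L})}^{1/2}\,\|\nabla u_{L}(t)\|_{H(\Omega_{L})}.
\end{equation*}

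Finally I would square and integrate over $(0,T)$: the factor $\|u_{L}(t)\|_{H(\Omega_{L})}$ is bounded by the constant in \eqref{3.11}, and $\|u_{L}(t)\|_{V(\Omega_{L})}\,\|\nabla u_{L}(t)\|_{H(\Omega_{L})}^{2}$ is controlled, after absorbing the $L^{\infty}_{t}H$ bound once more, by $\|\nabla u_{L}(t)\|_{H(\Omega_{L})}^{2}$ times a constant, which is integrable by \eqref{3.12}. This yields $\|B_{1}(u_{L},u_{L})\|_{L^{2}(0,T;V^{\ast}(\Omega_{L}))}\le C(f_{L},u_{L0},mes\,\Omega)$, with the constant independent of the particular solution, which is the assertion. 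The only delicate point is justifying the planar Ladyzhenskaya inequality and the integration–by–parts identity $\int |u_{Li}|^{2}\operatorname{div}u_{L}=0$ used to symmetrize the form on $\Omega_{L}$, which is exactly where the condition $\operatorname{div}u_{L}=0$ from \eqref{3.4} and the $Lip_{loc}$ regularity of $P_{x_{3}}\Omega_{L}$ enter; everything else is the routine two–dimensional estimate for the Navier–Stokes nonlinearity.
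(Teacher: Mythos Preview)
Your pointwise estimate is correct, but the time integration step fails. After squaring
\[
\|B_{1}(u_{L},u_{L})(t)\|_{V^{\ast}}\;\le\;C\,\|u_{L}(t)\|_{H}^{1/2}\,\|u_{L}(t)\|_{V}^{1/2}\,\|\nabla u_{L}(t)\|_{H}
\]
you have to integrate $\|u_{L}\|_{H}\,\|u_{L}\|_{V}\,\|\nabla u_{L}\|_{H}^{2}\sim \|u_{L}\|_{H}\,\|u_{L}\|_{V}^{3}$ over $(0,T)$. Your claim that ``$\|u_{L}(t)\|_{V}\,\|\nabla u_{L}(t)\|_{H}^{2}$ is controlled, after absorbing the $L^{\infty}_{t}H$ bound once more, by $\|\nabla u_{L}(t)\|_{H}^{2}$ times a constant'' is false: the factor $\|u_{L}(t)\|_{V}$ involves a full gradient and cannot be absorbed into the $L^{\infty}(0,T;H)$ bound. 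With only $u_{L}\in L^{\infty}(0,T;H)\cap L^{2}(0,T;V)$ the quantity $\int_{0}^{T}\|u_{L}\|_{V}^{3}\,dt$ is in general infinite, so your argument yields at best $B_{1}(u_{L},u_{L})\in L^{4/3}(0,T;V^{\ast})$, the three–dimensional exponent, not $L^{2}$.

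The missing idea is precisely the integration by parts you allude to only at the end, but applied in the right place. Using the divergence condition \eqref{3.4} one has the skew–symmetry $b_{L}(u_{L},u_{L},v)=-\,b_{L}(u_{L},v,u_{L})$, which moves the derivative onto the test function $v$. This is what the paper does: after the shift the integrand is of type $|u_{L}|^{2}\,|\nabla v|$, so H\"older gives $\|u_{L}\|_{4}^{2}\,\|v\|_{V}$ and hence $\|B_{1}(u_{L},u_{L})\|_{V^{\ast}}\le c\,\|u_{L}\|_{4}^{2}\le c\,\|u_{L}\|_{H}\,\|u_{L}\|_{V}$ by the two–dimensional Ladyzhenskaya inequality. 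Squaring now produces $\|u_{L}\|_{H}^{2}\,\|u_{L}\|_{V}^{2}$, which \emph{is} integrable:
\[
\int_{0}^{T}\|B_{1}(u_{L},u_{L})\|_{V^{\ast}}^{2}\,dt\;\le\;c\,\|u_{L}\|_{L^{\infty}(0,T;H)}^{2}\int_{0}^{T}\|u_{L}\|_{V}^{2}\,dt.
\]
In short, the skew–symmetry must be used \emph{before} the H\"older/Ladyzhenskaya step, not as an afterthought; otherwise the power count on $\|u_{L}\|_{V}$ comes out wrong by exactly one.
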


\begin{proof}
At first, we will show the boundedness of the operator $B_{1}$ acting from $%
V\left( \Omega _{L}\right) \times V\left( \Omega _{L}\right) $ to $V^{\ast
}\left( \Omega _{L}\right) $ for a. e. $t\in \left( 0,T\right) $. We have 
\begin{equation*}
\left\langle B_{1}\left( u_{L},u_{L}\right) ,v\right\rangle _{\Omega _{L}}=
\end{equation*}%
\begin{equation*}
=\underset{i=1}{\overset{3}{\sum }}\underset{P_{x_{3}}\Omega _{L}}{\int }%
\left[ u_{L1}D_{1}+u_{L2}D_{2}-\frac{1}{2}u_{L3}\left(
a_{1}^{-1}D_{1}+a_{2}^{-1}D_{2}\right) \right] u_{Li},v_{Li}dx_{1}dx_{2}=
\end{equation*}%
\begin{equation}
\underset{i=1}{\overset{3}{\sum }}\underset{P_{x_{3}}\Omega _{L}}{\int }%
\left[ \left( u_{L1}-\frac{1}{2}a_{1}^{-1}u_{L3}\right) D_{1}+\left( u_{L2}-%
\frac{1}{2}a_{2}^{-1}u_{L3}\right) D_{2}\right] u_{Li}v_{i}dx_{1}dx_{2}
\label{3.13}
\end{equation}%
due to (\ref{3.4}) .

Hence follows 
\begin{equation*}
\left\vert \left\langle B_{1}\left( u_{L},u_{L}\right) ,v\right\rangle
_{\Omega _{L}}\right\vert \leq c_{1}\underset{i=1}{\overset{3}{\sum }}%
\underset{P_{x_{3}}\Omega _{L}}{\int }\left\vert u_{L}\right\vert ^{2}\left(
\left\vert D_{1}v_{i}\right\vert +\left\vert D_{2}v_{i}\right\vert \right)
dx_{1}dx_{2}\leq
\end{equation*}%
\begin{equation}
\leq c\left\Vert u_{L}\right\Vert _{L^{4}\left( \Omega _{L}\right)
}^{2}\left\Vert v\right\Vert _{V\left( \Omega _{L}\right) }\Longrightarrow
\left\Vert B_{1}\left( u_{L},u_{L}\right) \right\Vert _{V^{\ast }\left(
\Omega _{L}\right) }\leq c\left\Vert u_{L}\right\Vert _{V\left( \Omega
_{L}\right) }^{2},  \label{3.14}
\end{equation}%
due to $V\left( \Omega _{L}\right) \subset L^{4}\left( \Omega _{L}\right) $.
This shows that operator $B_{1}:V\left( \Omega _{L}\right) \longrightarrow
V^{\ast }\left( \Omega _{L}\right) $ is bounded, and continuous for a. e. $%
t\in \left( 0,T\right) $, due to $V\left( \Omega _{L}\right) \subset
L^{4}\left( \Omega _{L}\right) $.

Finally, we obtain the needed result using the above inequality and the
well-known inequality 
\begin{equation*}
\overset{T}{\underset{0}{\int }}\left\Vert B_{1}\left( u_{L}\left( t\right)
,u_{L}\left( t\right) \right) \right\Vert _{V^{\ast }}^{2}dt\leq c\overset{T}%
{\underset{0}{\int }}\left\Vert u_{L}\left( t\right) \right\Vert
_{L^{4}}^{4}dt\leq c_{1}\overset{T}{\underset{0}{\int }}\left\Vert
u_{L}\left( t\right) \right\Vert _{H}^{2}\left\Vert u_{L}\right\Vert
_{V}^{2}dt\leq
\end{equation*}%
\begin{equation*}
\leq c_{1}\left\Vert u_{L}\right\Vert _{L^{\infty }\left( 0,T;H\right) }^{2}%
\overset{T}{\underset{0}{\int }}\left\Vert u_{L}\right\Vert
_{V}^{2}dt\Longrightarrow
\end{equation*}%
\begin{equation}
\left\Vert B_{1}\left( u_{L},u_{L}\right) \right\Vert _{L^{2}\left(
0,T;V^{\ast }\right) }\leq c_{1}\left\Vert u_{L}\right\Vert _{L^{\infty
}\left( 0,T;H\right) }\left\Vert u_{L}\right\Vert _{L^{2}\left( 0,T;V\right)
}.  \label{3.15}
\end{equation}

according to 2-dimensionality of the space variable $x$ (see, \cite{Lad1}, 
\cite{Lio1}, \cite{Tem1}, etc.). Thus, is proved that 
\begin{equation*}
B_{1}:V\left( Q_{L}^{T}\right) \times V\left( Q_{L}^{T}\right)
\longrightarrow L^{2}\left( 0,T;V^{\ast }\right)
\end{equation*}%
is a bounded operator, consequently, trilinear form $b_{L}\left(
u_{L},u_{L},v\right) $ is a bounded form also.
\end{proof}

So, it remains to receive the necessary a priori estimate for $\frac{%
\partial u_{L}}{\partial t}$ and the weak compactness of operator $%
B_{1}:V\left( \Omega _{L}\right) \longrightarrow V^{\ast }\left( \Omega
_{L}\right) $. \footnote{%
It should be noted that the obtained results are independent of choosing $2$
variables from $(x_{1},x_{2},x_{3})$, using which the $%
%TCIMACRO{\U{3a9} }%
%BeginExpansion
\Omega
%EndExpansion
_{L}$ is represented.}

For investigation, we will use Faedo-Galerkin's method since $V\left( \Omega
_{L}\right) $ is the separable space. It is not difficult to see, in this
case, the approximate solutions will have estimates analogous to (\ref{3.11}%
), (\ref{3.12}) and (\ref{3.15}). So, let $\left\{ w_{i}\right\}
_{i=1}^{\infty }$ is total system in $V\left( \Omega _{L}\right) $, then
from the system of equations (\ref{3.6})-(\ref{3.6b}) can define for each $%
m=1,2,...$ the system of equations 
\begin{equation*}
\left\langle \frac{d}{dt}u_{Lm},w_{j}\right\rangle _{\Omega
_{L}}=\left\langle \nu A_{1}u_{Lm},w_{j}\right\rangle _{\Omega
_{L}}+\left\langle B_{1}\left( u_{Lm},u_{Lm}\right) ,w_{j}\right\rangle
_{\Omega _{L}}+
\end{equation*}%
\begin{equation}
+\left\langle f_{L},w_{j}\right\rangle _{\Omega _{L}},\quad t\in \left( 0,T 
\right] ,\quad j=\overline{1,m},\quad  \label{3.16a}
\end{equation}%
\begin{equation*}
u_{Lm}\left( 0\right) =u_{0Lm}.
\end{equation*}%
for the approximate solutions $u_{Lm}$ (\ref{3.6})-(\ref{3.6b}) in the
following form

\begin{equation}
u_{Lm}=\overset{m}{\underset{i=1}{\sum }}u_{Lm}^{i}\left( t\right)
w_{i},\quad m=1,\ 2,....,  \label{3.16b}
\end{equation}%
where $u_{Lm}^{i}\left( t\right) $, $i=\overline{1,m}$ are unknown
functions, which will be determined by the (\ref{3.16a}). Here we assume $%
\left\{ u_{0Lm}\right\} _{m=1}^{\infty }\subset H\left( \Omega _{L}\right) $
be such sequence that $u_{0Lm}\longrightarrow u_{0L}$ in $H\left( \Omega
_{L}\right) $ as $m\longrightarrow \infty $. (Since $V\left( \Omega
_{L}\right) $ is everywhere dense in $H\left( \Omega _{L}\right) $, $u_{0Lm}$
can determine by using the total system $\left\{ w_{i}\right\}
_{i=1}^{\infty }$). \ 

So, using (\ref{3.16b}) in (\ref{3.16a}) we obtain 
\begin{equation*}
\overset{m}{\underset{j=1}{\sum }}\left\langle w_{j},w_{i}\right\rangle
_{\Omega _{L}}\frac{d}{dt}u_{Lm}^{j}\left( t\right) -\nu \overset{m}{%
\underset{j=1}{\sum }}\left\langle \Delta w_{j},w_{i}\right\rangle _{\Omega
_{L}}u_{Lm}^{j}\left( t\right) +
\end{equation*}%
\begin{equation*}
+\overset{m}{\underset{j,k=1}{\sum }}b_{L}\left( w_{j},w_{k},w_{i}\right)
u_{Lm}^{j}\left( t\right) u_{Lm}^{k}\left( t\right) =\left\langle
f_{L}\left( t\right) ,w_{i}\right\rangle _{\Omega _{L}},\ i=\overline{1,m}.
\end{equation*}%
As the matrix generated by $\left\langle w_{i},w_{j}\right\rangle _{\Omega
_{L}}$, $i,j=\overline{1,m}$, is nonsingular, therefore, its inverse exists.
Thanks to this, for unknown functions $u_{Lm}^{i}\left( t\right) $, $%
i=1,...,m$ we obtain the following Cauchy problem from the previous
equations 
\begin{equation*}
\frac{du_{Lm}^{i}\left( t\right) }{dt}=\overset{m}{\underset{j=1}{\sum }}%
c_{i,j}\left\langle f_{L}\left( t\right) ,w_{j}\right\rangle _{\Omega
_{L}}-\nu \overset{m}{\underset{j=1}{\sum }}d_{i,j}u_{Lm}^{j}\left( t\right)
+
\end{equation*}%
\begin{equation}
\overset{m}{\underset{j,k=1}{\sum }}h_{ijk}u_{Lm}^{j}\left( t\right)
u_{Lm}^{k}\left( t\right) ,  \label{3.16c}
\end{equation}%
\begin{equation*}
u_{Lm}^{i}\left( 0\right) =u_{0Lm}^{i},\quad i=1,...,m,\quad m=1,\ 2,...,
\end{equation*}%
where $u_{0Lm}^{i}$ is the $i^{th}$ component of $u_{0L}$ in this
representation $u_{0L}=\overset{\infty }{\underset{k=1}{\sum }}%
u_{0Lm}^{k}w_{k}$.

The Cauchy problem for the system of the nonlinear ordinary differential
equations (\ref{3.16c}) has a solution, which is defined on the whole of
interval $(0,T]$ due to uniformity of estimations received in (\ref{3.11}), (%
\ref{3.12}), and (\ref{3.15}). Consequently, the approximate solution $%
u_{Lm} $ exists and belong to a bounded subset of $W^{1,2}\left( 0,T;V^{\ast
}\left( \Omega _{L}\right) \right) $ for every $m=1,\ 2,...$ since the right
side of ((\ref{3.16c}) belong to a bounded subset of $L^{2}\left(
0,T;V^{\ast }\left( \Omega _{L}\right) \right) $ as were proved above (see, (%
\ref{3.11}), (\ref{3.12}), (\ref{3.15})).

It isn%
%TCIMACRO{\U{b4}}%
%BeginExpansion
\'{}%
%EndExpansion
t difficult to see that if we take $\forall v\in V\left( \Omega _{L}\right) $
instead of $w_{k}$, and pass to limit concerning $m\longrightarrow \infty $
in equation (\ref{3.16b}) (could be, concerning a subsequence $\left\{
u_{Lm_{\mathit{l}}}\right\} _{\mathit{l}=1}^{\infty }$ of this sequence,
since it is known that such subsequence exists) \ 
\begin{equation}
\left\langle \frac{d}{dt}u_{L},v\right\rangle _{\Omega _{L}}=\left\langle
f_{L}+\nu \Delta u_{L}-\chi ,v\right\rangle _{\Omega _{L}},  \label{3.16d}
\end{equation}%
due to the fullness of the class $\left\{ w_{i}\right\} _{i=1}^{\infty }$ in 
$V\left( \Omega _{L}\right) .$ Where the function $\chi $ belongs to $%
L^{2}\left( 0,T;V^{\ast }\left( \Omega _{L}\right) \right) $ and is
determined by equality 
\begin{equation*}
\underset{\mathit{l}\longrightarrow \infty }{\lim }\left\langle B\left(
u_{Lm_{\mathit{l}}}\right) ,v\right\rangle _{\Omega _{L}}=\left\langle \chi
,v\right\rangle _{\Omega _{L}},
\end{equation*}%
which is shown in the above section. Hence, since in the equation (\ref%
{3.16d}) the right side belongs to $L^{2}\left( 0,T\right) $, therefore, and
the left side also belongs to $L^{2}\left( 0,T\right) $, according to the
previous a priori estimations, and due to Proposition \ref{P_3.1}, i.e. 
\begin{equation*}
\frac{du_{L}}{dt}\in L^{2}\left( 0,T;V^{\ast }\left( \Omega _{L}\right)
\right) .
\end{equation*}%
Consequently, the following result is proven.

\begin{proposition}
\label{P_3.2}Under conditions the Theorem \ref{Th_1}, $\frac{du_{L}}{dt}$
belongs to a bounded subset of the space $L^{2}\left( 0,T;V^{\ast }\left(
\Omega _{L}\right) \right) $.
\end{proposition}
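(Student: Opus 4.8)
The plan is to obtain the bound on $\partial_t u_L$ by reading it off from the equation it satisfies, once $u_L$ and the nonlinear term $B_1(u_L,u_L)$ are already under control. First I would recall that the Galerkin approximations $u_{Lm}$ solving (\ref{3.16a})--(\ref{3.16b}) are defined on all of $(0,T]$ and, upon testing (\ref{3.16a}) against $u_{Lm}$ and repeating the computation that led to (\ref{3.9})--(\ref{3.11}), satisfy estimates of the type (\ref{3.11})--(\ref{3.12}) uniformly in $m$; hence $\{u_{Lm}\}$ lies in a fixed bounded subset of $V(Q_L^T)=L^{2}(0,T;V(\Omega_L))\cap L^{\infty}(0,T;H(\Omega_L))$, and in particular $\|u_{Lm}\|_{L^{2}(0,T;V(\Omega_L))}$ is bounded independently of $m$.

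Next I would bound, uniformly in $m$, each of the three operators on the right-hand side of (\ref{3.16a}) in the norm of $L^{2}(0,T;V^{\ast}(\Omega_L))$. Since $A_1$ is a second-order differential operator with constant coefficients, $A_1:V(\Omega_L)\to V^{\ast}(\Omega_L)$ is bounded, so $\|\nu A_1 u_{Lm}\|_{L^{2}(0,T;V^{\ast}(\Omega_L))}\le c\,\|u_{Lm}\|_{L^{2}(0,T;V(\Omega_L))}$ stays bounded. By Proposition \ref{P_3.1} together with (\ref{3.14})--(\ref{3.15}), $\|B_1(u_{Lm},u_{Lm})\|_{L^{2}(0,T;V^{\ast}(\Omega_L))}\le c_{1}\|u_{Lm}\|_{L^{\infty}(0,T;H(\Omega_L))}\|u_{Lm}\|_{L^{2}(0,T;V(\Omega_L))}$ is bounded, exploiting the two-dimensionality of $\Omega_L$; and $f_L\in L^{2}(0,T;H(\Omega_L))\hookrightarrow L^{2}(0,T;V^{\ast}(\Omega_L))$ by the hypotheses of Theorem \ref{Th_1}. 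Summing these, the right side of (\ref{3.16a}) is bounded in $L^{2}(0,T;V^{\ast}(\Omega_L))$ independently of $m$; testing against the total system $\{w_j\}$ and using the uniform boundedness of the associated Galerkin projections, $\{\partial_t u_{Lm}\}$ is likewise bounded there, with a bound depending only on the data as in (\ref{3.11})--(\ref{3.12}).

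It then remains to pass to the limit along a subsequence: $u_{Lm}\rightharpoonup u_L$ in $L^{2}(0,T;V(\Omega_L))$ and weak-$\ast$ in $L^{\infty}(0,T;H(\Omega_L))$, $\partial_t u_{Lm}\rightharpoonup\partial_t u_L$ in $L^{2}(0,T;V^{\ast}(\Omega_L))$, and $B_1(u_{Lm},u_{Lm})\rightharpoonup\chi$ in $L^{2}(0,T;V^{\ast}(\Omega_L))$, where $\chi$ is the limit already identified in the preceding part of this section (the identification $\chi=B_1(u_L,u_L)$ relying on an Aubin--Lions argument that yields strong convergence of $u_{Lm}$ in $L^{2}(0,T;H(\Omega_L))$). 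This gives the limiting identity (\ref{3.16d}), namely $\partial_t u_L=f_L+\nu A_1 u_L-\chi$ as an equality in $L^{2}(0,T;V^{\ast}(\Omega_L))$; since each term on the right lies in a fixed bounded subset of that space, weak lower semicontinuity of the norm furnishes the asserted bound for $\partial_t u_L$. I expect the only genuinely substantive point to be the identification of the weak limit $\chi$ with $B_1(u_L,u_L)$, so that (\ref{3.16d}) is a bona fide a priori bound on the solution rather than merely on the approximations; the remainder is routine assembly of the bounds recorded in (\ref{3.11})--(\ref{3.15}) and Proposition \ref{P_3.1}.
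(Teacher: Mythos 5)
Your proposal is correct and follows essentially the paper's own route: uniform Galerkin estimates (\ref{3.11})--(\ref{3.12}), boundedness of $\nu A_{1}u_{L}$, $B_{1}$ (via Proposition \ref{P_3.1} and (\ref{3.15})) and $f_{L}$ in $L^{2}\left( 0,T;V^{\ast }\left( \Omega _{L}\right) \right) $, passage to the weak limit, and reading the bound for $\frac{du_{L}}{dt}$ off the limit equation (\ref{3.16d}). The only remark worth making is that the identification $\chi =B_{1}\left( u_{L},u_{L}\right) $, which you single out as the substantive point, is not actually needed for this proposition -- $\chi $ being a weak limit already lies in a bounded subset of $L^{2}\left( 0,T;V^{\ast }\left( \Omega _{L}\right) \right) $, and the paper defers that identification to Proposition \ref{P_3.3}.
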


From the above results of this section due to the abstract form of the
Riesz-Fischer theorem, the following statement holds

\begin{corollary}
\label{C_3.2}Under the above-mentioned conditions the function $u_{L}$
belongs to a bounded subset of the space $\mathcal{V}\left( Q_{L}^{T}\right) 
$, where 
\begin{equation}
\mathcal{V}\left( Q_{L}^{T}\right) \equiv V\left( Q_{L}^{T}\right) \cap
W^{1,2}\left( 0,T;V^{\ast }\left( \Omega _{L}\right) \right) .  \label{3.16}
\end{equation}
\end{corollary}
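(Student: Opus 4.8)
The plan is to assemble the conclusion directly from the a priori bounds already established in this section, without any new estimate. Recall the defining decomposition $\mathcal{V}(Q_L^T)\equiv V(Q_L^T)\cap W^{1,2}(0,T;V^{\ast}(\Omega_L))$, where $V(Q_L^T)=L^{2}(0,T;V(\Omega_L))\cap L^{\infty}(0,T;H(\Omega_L))$. So it suffices to exhibit, for every admissible cross-section $\Omega_L$, a constant $C=C(f_L,u_{0L},\mathrm{mes}\,\Omega)$, independent of the particular weak solution $u_L$, that bounds the sum of the four norms $\|u_L\|_{L^{\infty}(0,T;H(\Omega_L))}$, $\|u_L\|_{L^{2}(0,T;V(\Omega_L))}$, $\|\partial_t u_L\|_{L^{2}(0,T;V^{\ast}(\Omega_L))}$.

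First I would invoke the energy inequality (\ref{3.10}) and the resulting estimates (\ref{3.11})--(\ref{3.12}): the former gives $\sup_{t\in(0,T]}\|u_L\|_{H(\Omega_L)}(t)\leq C$, i.e. the $L^{\infty}(0,T;H(\Omega_L))$ bound; the latter, combined with the Poincaré-type/Korn-type equivalence on $\Omega_L$ (legitimate because, by Proposition \ref{Pr_4.1}, the elliptic operator $\nu A_1$ is coercive on $V(\Omega_L)$, so $\|D_1 u_L\|_{L^2}+\|D_2 u_L\|_{L^2}$ controls $\|u_L\|_{V(\Omega_L)}$), yields the $L^{2}(0,T;V(\Omega_L))$ bound. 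Together these two give $u_L\in V(Q_L^T)$ with norm bounded by $C(f_L,u_{0L},\mathrm{mes}\,\Omega)$. Next I would invoke Proposition \ref{P_3.2}, which states precisely that $\partial_t u_L$ lies in a bounded subset of $L^{2}(0,T;V^{\ast}(\Omega_L))$; its proof rests on the equation (\ref{3.16d}), on the bound (\ref{3.15}) for the trilinear form from Proposition \ref{P_3.1}, and on the already-obtained bounds (\ref{3.11})--(\ref{3.12}). Consequently $u_L\in W^{1,2}(0,T;V^{\ast}(\Omega_L))$ with a bound depending only on the data.

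Finally, intersecting the two memberships gives $u_L\in V(Q_L^T)\cap W^{1,2}(0,T;V^{\ast}(\Omega_L))=\mathcal{V}(Q_L^T)$, with total norm bounded by a constant depending only on $f_L$, $u_{0L}$ and $\mathrm{mes}\,\Omega$ (hence uniform over the set of all weak solutions). The phrase "due to the abstract form of the Riesz--Fischer theorem" in the statement presumably refers to the completeness of the intermediate space $\mathcal{V}(Q_L^T)$ as a Banach space (so that the bounded set is a genuine subset of a well-defined complete space), together with the standard fact that $u_L\in L^2(0,T;V(\Omega_L))$ with $\partial_t u_L\in L^2(0,T;V^{\ast}(\Omega_L))$ forces $u_L\in C([0,T];H(\Omega_L))$ after modification on a null set, reconciling the $L^\infty$ bound with the continuity built into a weak solution. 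I do not expect any serious obstacle here: the only point requiring a word of care is making the coercivity/norm-equivalence step explicit, i.e. that the two tangential derivatives $D_1,D_2$ on the two-dimensional domain $\Omega_L$ (for the coordinate choice under which $P_{x_k}L=R^2$) control the full $V(\Omega_L)$-norm — this is where Proposition \ref{Pr_4.1} and the Lipschitz regularity of $\Omega_L$ are used, and it is the same computation already performed in (\ref{3.6a}).
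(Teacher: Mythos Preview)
Your proposal is correct and follows essentially the same approach as the paper, which treats the corollary as an immediate consequence of the energy estimates (\ref{3.11})--(\ref{3.12}) together with Proposition~\ref{P_3.2}, invoking the Riesz--Fischer theorem only for completeness of the target space. Your write-up is in fact more careful than the paper's one-line justification, particularly in making explicit the norm-equivalence step on the two-dimensional cross-section.
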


Thus, for the proof that $u_{L}$ is the solution to the problem (\ref{3.3})
- (\ref{3.5}) (or the problem (\ref{3.6})-(\ref{3.6b})) remains to shows
that $\chi =B_{1}\left( u_{L},u_{L}\right) $ (or $\left\langle \chi
,v\right\rangle _{\Omega _{L}}=\left( B_{1}\left( u_{L},u_{L}\right)
,v\right) $) for $\forall v\in V\left( \Omega _{L}\right) $. \ \ 

\begin{proposition}
\label{P_3.3}Operator $B_{1}:V\left( Q_{L}^{T}\right) \longrightarrow
L^{2}\left( 0,T;V^{\ast }\left( \Omega _{L}\right) \right) $ is a weakly
compact operator, in other words, if the sequence $\left\{ u_{L}^{m}\right\}
_{1}^{\infty }\subset V\left( Q_{L}^{T}\right) $ is weakly converged in $%
V\left( Q_{L}^{T}\right) $ then there exists, at least, such subsequence $%
\left\{ u_{L}^{m_{k}}\right\} _{1}^{\infty }\subset \left\{
u_{L}^{m}\right\} _{1}^{\infty }$ that the sequence $\left\{ B_{1}\left(
u_{L}^{m_{k}},u_{L}^{m_{k}}\right) \right\} _{1}^{\infty }$ weakly converge
in $L^{2}\left( 0,T;V^{\ast }\left( \Omega _{L}\right) \right) $.
\end{proposition}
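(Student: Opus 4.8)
The plan is to follow the classical Aubin–Lions–Simon compactness argument, adapted to the two-dimensional cross-section $\Omega_L$. First I would fix a sequence $\{u_L^m\}_{1}^{\infty}\subset V(Q_L^T)$ converging weakly in $V(Q_L^T)$, i.e. weakly in $L^2(0,T;V(\Omega_L))$ and weakly-$*$ in $L^\infty(0,T;H(\Omega_L))$. In particular the sequence is bounded in these norms. The first step is to invoke Proposition~\ref{P_3.1} (together with the interpolation inequality $\|u\|_{L^4(\Omega_L)}^2\le c\|u\|_{H(\Omega_L)}\|u\|_{V(\Omega_L)}$ valid because $\dim\Omega_L=2$), which already yields that $\{B_1(u_L^m,u_L^m)\}$ is bounded in $L^2(0,T;V^\ast(\Omega_L))$. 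Hence there is a subsequence, still denoted $\{u_L^{m_k}\}$, along which $B_1(u_L^{m_k},u_L^{m_k})\rightharpoonup \chi$ weakly in $L^2(0,T;V^\ast(\Omega_L))$; to conclude the proposition it suffices to identify $\chi$, which is where strong convergence of $u_L^{m_k}$ must be produced.

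The second step is therefore to upgrade the weak convergence of a subsequence to strong convergence in $L^2(0,T;H(\Omega_L))$. For this I would first extract, from the equations $(u_L^m)'=f_L+\nu A_1 u_L^m-B_1(u_L^m,u_L^m)$, a uniform bound for $(u_L^m)'$ in $L^2(0,T;V^\ast(\Omega_L))$ exactly as in the derivation preceding Proposition~\ref{P_3.2}: the right-hand side is bounded in $L^2(0,T;V^\ast(\Omega_L))$ by the a~priori estimates \eqref{3.11}, \eqref{3.12} and \eqref{3.15}. Now one has $u_L^{m_k}$ bounded in $L^2(0,T;V(\Omega_L))$ with $(u_L^{m_k})'$ bounded in $L^2(0,T;V^\ast(\Omega_L))$, and the embedding $V(\Omega_L)\hookrightarrow H(\Omega_L)$ is compact (it is the analogue of the compact embedding $W_0^{1,2}\hookrightarrow L^2$ on the bounded $Lip_{loc}$ domain $\Omega_L\subset R^2$). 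By the Aubin–Lions–Simon lemma, passing to a further subsequence, $u_L^{m_k}\to u_L$ strongly in $L^2(0,T;H(\Omega_L))$ and, along a sub-subsequence, a.e. on $Q_L^T$.

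The third step is to pass to the limit in the trilinear form. Using the divergence-free rewriting \eqref{3.13}, for fixed $v\in V(\Omega_L)$ (say with $v\in(C_0^\infty)^3$ by density, so $D_1 v_i,D_2 v_i\in L^\infty$),
\begin{equation*}
\langle B_1(u_L^{m_k},u_L^{m_k}),v\rangle_{\Omega_L}
=\sum_{i=1}^{3}\int_{P_{x_3}\Omega_L}\Big[\big(u_{L1}^{m_k}-\tfrac12 a_1^{-1}u_{L3}^{m_k}\big)D_1+\big(u_{L2}^{m_k}-\tfrac12 a_2^{-1}u_{L3}^{m_k}\big)D_2\Big]u_{Li}^{m_k}\,v_i\,dx_1dx_2 .
\end{equation*}
Integrating by parts moves one derivative onto $v$, so the integrand becomes a quadratic expression in $u_L^{m_k}$ times a bounded factor; the products $u_{Lj}^{m_k}u_{Li}^{m_k}$ converge in $L^1(Q_L^T)$ because $u_L^{m_k}\to u_L$ strongly in $L^2(0,T;H(\Omega_L))$ (hence the products converge in $L^1$ by Cauchy–Schwarz), and $v,D_j v$ are bounded. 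Therefore $\langle B_1(u_L^{m_k},u_L^{m_k}),v\rangle_{\Omega_L}\to\langle B_1(u_L,u_L),v\rangle_{\Omega_L}$ in $L^1(0,T)$, which forces $\chi=B_1(u_L,u_L)$ and in particular $B_1(u_L^{m_k},u_L^{m_k})\rightharpoonup B_1(u_L,u_L)$ in $L^2(0,T;V^\ast(\Omega_L))$, the desired conclusion. I expect the main obstacle to be the second step: one must be sure that the uniform $(u_L^m)'$ bound is legitimately available for an \emph{arbitrary} weakly convergent sequence in $V(Q_L^T)$ and not only for Galerkin approximants — if the proposition is meant for general such sequences, the statement should really carry the hypothesis that the sequence solves (or approximately solves) \eqref{3.6}, or one restricts to $\mathcal V(Q_L^T)$ where the time-derivative bound comes for free; I would state and use it in the latter form, which is all that is needed for the existence proof.
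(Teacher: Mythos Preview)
Your proposal is correct and follows essentially the same route as the paper: extract a subsequence with $B_1(u_L^{m_k},u_L^{m_k})\rightharpoonup\chi$ by the boundedness from Proposition~\ref{P_3.1}, use Aubin--Lions to upgrade to strong convergence in $L^2(0,T;H(\Omega_L))$, and identify $\chi$ by testing against smooth $v$ via the rewriting \eqref{3.13} so that only products $u_{Li}^{m_k}u_{Lj}^{m_k}$ appear. The concern you raise in your last paragraph is well placed and applies equally to the paper's own argument: the paper simply invokes the compact embedding $\mathcal{V}(Q_L^T)\hookrightarrow L^2(0,T;H)$ without explaining why a general sequence in $V(Q_L^T)$ enjoys the time-derivative bound, so your proposed resolution---work in $\mathcal{V}(Q_L^T)$, which is all that the existence proof requires---is precisely the tacit hypothesis the paper is using.
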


\begin{proof}
Let the sequence $\left\{ u_{L}^{m}\right\} _{1}^{\infty }\subset V\left(
Q_{L}^{T}\right) $ is weakly converged to $u_{L}^{0}$ in $V\left(
Q_{L}^{T}\right) $. Then there exists such subsequence $\left\{
u_{L}^{m_{k}}\right\} _{1}^{\infty }\subset \left\{ u_{L}^{m}\right\}
_{1}^{\infty }$ that $u_{L}^{m_{k}}\longrightarrow u_{L}^{0}$ in $%
L^{2}\left( 0,T;H\right) $, due to the known theorems on the compactness of
the embedding, particularly, as is known the following embedding 
\begin{equation*}
\mathcal{V}\left( Q_{L}^{T}\right) \equiv L^{2}\left( 0,T;V\left( \Omega
_{L}\right) \right) \cap W^{1,2}\left( 0,T;V^{\ast }\left( \Omega
_{L}\right) \right) \subset L^{2}\left( 0,T;H\right)
\end{equation*}%
is compact (see, e. g. \cite{Lio1}, \cite{Tem1}, \cite{Sol3}).

It is enough to show that generated by the expression 
\begin{equation*}
\underset{i=1}{\overset{3}{\sum }}\left[ \left( u_{L1}-\frac{1}{2}%
a_{1}^{-1}u_{L3}\right) D_{1}+\left( u_{L2}-\frac{1}{2}a_{2}^{-1}u_{L3}%
\right) D_{2}\right] u_{Li}
\end{equation*}%
operator $B_{1}$ is weakly compact from $\mathcal{V}\left( Q_{L}^{T}\right) $
to the $L^{2}\left( 0,T;V^{\ast }\left( \Omega _{L}\right) \right) $.

The operator $B_{1}:\mathcal{V}\left( Q_{L}^{T}\right) \longrightarrow L^{%
\frac{4}{3}}\left( 0,T;V^{\ast }\left( \Omega _{L}\right) \right) $ is
bounded, (i.e. the image of each bounded subset from the space $\mathcal{V}%
\left( Q_{L}^{T}\right) $ under the mapping $B_{1}$ is a bounded subset of
the space $L^{2}\left( 0,T;V^{\ast }\left( \Omega _{L}\right) \right) $) by
a priori estimations, and Proposition \ref{P_3.1}.\ \ 

From mentioned compactness theorem follows the sequence $\left\{
u_{L}^{m}\right\} _{1}^{\infty }$ posses a subsequence $\left\{
u_{L}^{m_{k}}\right\} _{1}^{\infty }\subset \left\{ u_{L}^{m}\right\}
_{1}^{\infty }$ that strongly convergent in the space $L^{2}\left(
0,T;H\right) $ to some element $u_{L}$ of $L^{2}\left( 0,T;H\right) $.
Consequently, $B_{1}\left( \left\{ u_{L}^{m_{k}}\right\} _{1}^{\infty
},\left\{ u_{L}^{m_{k}}\right\} _{1}^{\infty }\right) $ belongs to a bounded
subset of space $L^{2}\left( 0,T;V^{\ast }\left( \Omega _{L}\right) \right) $%
. Thence lead that there is such element $\chi \in L^{2}\left( 0,T;V^{\ast
}\left( \Omega _{L}\right) \right) $ that sequence $B_{1}\left(
u_{L}^{m_{k}},u_{L}^{m_{k}}\right) $ weakly converges to $\chi $ when $%
m_{k}\nearrow \infty $, i.e. 
\begin{equation}
B_{1}\left( u_{L}^{m_{k}},u_{L}^{m_{k}}\right) \rightharpoonup \chi \quad 
\text{ in }L^{2}\left( 0,T;V^{\ast }\left( \Omega _{L}\right) \right) ,
\label{3.17}
\end{equation}%
due to the reflexivity of this space, there exists, at least, such
subsequence by which this occurs.

If we set the vector space 
\begin{equation*}
\mathcal{C}^{1}\left( \overline{Q}_{L}\right) \equiv \left\{ v\left\vert \
v_{i}\in C^{1}\left( \left[ 0,T\right] ;C_{0}^{1}\left( \overline{\Omega _{L}%
}\right) \right) ,\right. i=1,2,3\right\}
\end{equation*}%
and consider the trilinear form 
\begin{equation*}
\underset{0}{\overset{T}{\int }}\left\langle B\left( u_{L}^{m}\right)
,v\right\rangle _{\Omega _{L}}dt=\underset{0}{\overset{T}{\int }}b\left(
u_{L}^{m},u_{L}^{m},v\right) dt=\underset{0}{\overset{T}{\int }}\left\langle 
\underset{j=1}{\overset{3}{\sum }}u_{Lj}^{m}D_{j}u_{L}^{m},v\right\rangle
_{\Omega _{L}}dt=
\end{equation*}%
for $v\in \mathcal{C}^{1}\left( \overline{Q}_{L}\right) $, then we get 
\begin{equation}
-\underset{i=1}{\overset{3}{\sum }}\underset{0}{\overset{T}{\int }}\underset{%
P_{x_{3}}\Omega _{L}}{\int }\left[ \left( _{L1}^{m}-\frac{1}{2}%
a_{1}^{-1}u_{L3}^{m}\right) D_{1}v_{i}+\left( u_{L2}^{m}-\frac{1}{2}%
a_{2}^{-1}u_{L3}^{m}\right) D_{2}v_{i}\right] u_{Li}^{m}dx_{1}dx_{2}dt.
\label{3.17a}
\end{equation}

according to (\ref{3.13}). Now, if we separately take the arbitrary term of
this sum, then it isn't difficult to see that the following convergences are
true 
\begin{equation*}
u_{Li}^{m}u_{L1}^{m}\rightharpoonup u_{Li}u_{L1};\quad
a_{1}^{-1}u_{Li}^{m}u_{L3}^{m}\rightharpoonup a_{1}^{-1}u_{Li}u_{L3}
\end{equation*}%
since $u_{Li}^{m_{k}}\longrightarrow u_{Li}$ in $L^{2}\left( 0,T;H\right) $
and $u_{Li}^{m_{k}}\rightharpoonup u_{Li}$ in $L^{\infty }\left(
0,T;H\right) $ $\ast -$ weakly,\ as $\left\{ u_{L}^{m}\right\} _{1}^{\infty
} $ belongs to a bounded subset of $\mathcal{V}\left( Q_{L}^{T}\right) $,
and this is fulfilled for each term of (\ref{3.17a}).

Thus, passing to the limit when $m_{k}\nearrow \infty $ we obtain 
\begin{equation*}
\chi =B_{1}\left( u_{L},u_{L}\right) \Longrightarrow B_{1}\left(
u_{L}^{m_{k}},u_{L}^{m_{k}}\right) \rightharpoonup B_{1}\left(
u_{L},u_{L}\right) \text{ in the distribution sense.}
\end{equation*}%
Whence using the density of $\mathcal{C}^{1}\left( \overline{Q}_{L}\right) $
in $\mathcal{V}\left( Q_{L}^{T}\right) $, in addition, since $B_{1}\left(
u_{L}^{m_{k}},u_{L}^{m_{k}}\right) \rightharpoonup \chi $ takes place in the
space $L^{2}\left( 0,T;V^{\ast }\left( \Omega _{L}\right) \right) $ we get
that $\chi =B_{1}\left( u_{L},u_{L}\right) $ also takes place in this space.
\end{proof}

Consequently, we proved the existence of the function $u_{L}\in \mathcal{V}%
\left( Q_{L}^{T}\right) $, that satisfies equations (\ref{3.6})-(\ref{3.6b})
applying to this problem of the Faedo-Galerkin method, and using the
above-mentioned results.

Whence, one can conclude that function $u_{L}$ satisfies the problem (\ref%
{3.6})-(\ref{3.6b}) if we show the initial condition also satisfied in the
distribution sense. We will the proof of the satisfies of the initial
condition according to the usual way (see, \cite{Lad1}, \cite{Lio1}, \cite%
{Tem1} and references therein).

Let $\phi $ be a continuously differentiable function on $[0,T]$ with $\phi
(T)=0$. Multiplying (\ref{3.16b}) by $\phi (t)$, and integrating by parts in
the first integral we lead to the equation 
\begin{equation*}
-\underset{0}{\overset{T}{\int }}\left\langle u_{Lm},\frac{d}{dt}\phi
(t)w_{j}\right\rangle _{\Omega _{L}}dt=\underset{0}{\overset{T}{\int }}%
\left\langle \nu \Delta u_{Lm},\phi (t)w_{j}\right\rangle _{\Omega _{L}}dt+
\end{equation*}%
\begin{equation*}
\underset{0}{\overset{T}{\int }}b\left( u_{Lm},u_{Lm},\phi (t)w_{j}\right)
dt+\underset{0}{\overset{T}{\int }}\left\langle f_{L},\phi
(t)w_{j}\right\rangle _{\Omega _{L}}dt+\left\langle u_{0Lm},\phi
(0)w_{j}\right\rangle _{\Omega _{L}}.
\end{equation*}

Where can pass to the limit with respect to subsequence $\left\{
u_{Lm_{l}}\right\} _{l=1}^{\infty }$ of the sequence $\left\{ u_{Lm}\right\}
_{m=1}^{\infty }$ owing to the above-mentioned results. Then we arrive at
the equation 
\begin{equation*}
-\underset{0}{\overset{T}{\int }}\left\langle u_{L},\frac{d}{dt}\phi
(t)w_{j}\right\rangle _{\Omega _{L}}dt=\underset{0}{\overset{T}{\int }}%
\left\langle \nu \Delta u_{L},\phi (t)w_{j}\right\rangle _{\Omega _{L}}dt+
\end{equation*}%
\begin{equation}
\underset{0}{\overset{T}{\int }}b\left( u_{L},u_{L},\phi (t)w_{j}\right) dt+%
\underset{0}{\overset{T}{\int }}\left\langle f_{L},\phi
(t)w_{j}\right\rangle _{\Omega _{L}}dt+\left\langle u_{0L},\phi
(0)w_{j}\right\rangle _{\Omega _{L}},  \label{3.17b}
\end{equation}%
that holds for each $w_{j}$, $j=1,2,...$. Consequently, this equality holds
for any finite linear combination of the $w_{j}$, moreover, the (\ref{3.17b}%
) remains true also for any $v\in V\left( \Omega _{L}\right) $ due to the
continuability. Whence, one can conclude that function $u_{L}$ satisfies
equation (\ref{3.6}) in the distribution sense.

Now, if multiply (\ref{3.6}) by $\phi (t)$, and integrate concerning $t$,
then after integrating the first term by parts we get 
\begin{equation*}
-\underset{0}{\overset{T}{\int }}\left\langle u_{L},v\frac{d}{dt}\phi
(t)\right\rangle _{\Omega _{L}}dt-\underset{0}{\overset{T}{\int }}%
\left\langle \nu \Delta u_{L},\phi (t)v\right\rangle _{\Omega _{L}}dt+
\end{equation*}

\begin{equation*}
\underset{0}{\overset{T}{\int }}\left\langle \underset{j=1}{\overset{3}{\sum 
}}u_{Lj}D_{j}u_{L},\phi (t)v\right\rangle _{\Omega _{L}}dt=\underset{0}{%
\overset{T}{\int }}\left\langle f_{L},\phi (t)v\right\rangle _{\Omega
_{L}}dt+\left\langle u_{L}\left( 0\right) ,\phi (0)v\right\rangle _{\Omega
_{L}}.
\end{equation*}

If compare this with (\ref{3.17b}) after replacing $w_{j}$ with any $v\in
V\left( \Omega _{L}\right) $ then we obtain 
\begin{equation*}
\phi (0)\left\langle u_{L}\left( 0\right) -u_{0L},v\right\rangle _{\Omega
_{L}}=0.
\end{equation*}%
Thus, we get the satisfies of the initial condition due to arbitrariness of $%
v\in V\left( \Omega _{L}\right) $ and $\phi $, since function $\phi $ can be
chosen as $\phi (0)\neq 0$.

Consequently, the following result is proven.

\begin{theorem}
\label{Th_2.1}Under conditions the Theorem \ref{Th_1} for any functions $%
u_{0L}\in \left( H\left( \Omega _{L}\right) \right) ^{3}$ and $f_{L}\in
L^{2}\left( 0,T;V^{\ast }\left( \Omega _{L}\right) \right) $ the problem (%
\ref{3.3}) - (\ref{3.5}) has a weak solution $u_{L}\left( t,x\right) $, that
belongs to $\mathcal{V}\left( Q_{L}^{T}\right) $.
\end{theorem}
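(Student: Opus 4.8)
The plan is to establish Theorem~\ref{Th_2.1} by the Faedo--Galerkin method, essentially assembling the ingredients that have already been prepared in Section~\ref{Sec_.4}. First I would fix a total system $\{w_i\}_{i=1}^{\infty}$ in the separable space $V(\Omega_L)$, form the finite-dimensional Galerkin system (\ref{3.16a})--(\ref{3.16b}), and rewrite it as the Cauchy problem (\ref{3.16c}) for the coefficient functions $u_{Lm}^i(t)$; invertibility of the Gram matrix $\langle w_i,w_j\rangle_{\Omega_L}$ guarantees this reduction is legitimate. Local existence of the solution to (\ref{3.16c}) follows from the Peano/Carathéodory theorem since the right-hand side is polynomial in the unknowns and $f_L\in L^2(0,T;V^*(\Omega_L))$; the a~priori bounds (\ref{3.11}), (\ref{3.12}) and (\ref{3.15}) then show the solution does not blow up, so it extends to all of $(0,T]$, and the approximants $u_{Lm}$ lie in a fixed bounded subset of $\mathcal{V}(Q_L^T)$.

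Next I would extract a subsequence $\{u_{Lm_l}\}$ that converges weakly in $L^2(0,T;V(\Omega_L))$, weakly-$\ast$ in $L^\infty(0,T;H(\Omega_L))$, with $u_{Lm_l}'$ weakly convergent in $L^2(0,T;V^*(\Omega_L))$, and — by the Aubin--Lions--Simon compactness embedding $\mathcal{V}(Q_L^T)\subset L^2(0,T;H)$ invoked in the proof of Proposition~\ref{P_3.3} — strongly in $L^2(0,T;H(\Omega_L))$. Passing to the limit in the linear terms of (\ref{3.16a}) is immediate by weak convergence; for the trilinear term I would appeal directly to Proposition~\ref{P_3.3}, which gives $B_1(u_{Lm_l},u_{Lm_l})\rightharpoonup B_1(u_L,u_L)$ in $L^2(0,T;V^*(\Omega_L))$. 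This yields that the limit $u_L\in\mathcal{V}(Q_L^T)$ satisfies (\ref{3.6}) tested against each $w_j$, hence against any $v\in V(\Omega_L)$ by density and continuity.

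Finally I would verify the initial condition exactly as sketched after Proposition~\ref{P_3.3}: multiply (\ref{3.6}) by a scalar $\phi\in C^1[0,T]$ with $\phi(T)=0$, integrate by parts in $t$, pass to the limit from the analogous identity (\ref{3.17b}) for the approximants (which already carries the term $\langle u_{0L},\phi(0)w_j\rangle$ in the limit), and compare the two identities to conclude $\phi(0)\langle u_L(0)-u_{0L},v\rangle_{\Omega_L}=0$; choosing $\phi(0)\neq0$ and using arbitrariness of $v$ gives (\ref{3.6b}). The one place requiring genuine care — and the step I expect to be the main obstacle — is the passage to the limit in the nonlinear term, which is precisely why Proposition~\ref{P_3.3} (weak compactness of $B_1$, resting on the strong $L^2(0,T;H)$ convergence of the approximants and the two-dimensionality of $\Omega_L$ that makes the interpolation estimate (\ref{3.15}) available) was isolated beforehand; with that in hand, the remaining work is routine and the theorem follows.
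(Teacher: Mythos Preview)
Your proposal is correct and follows essentially the same approach as the paper: the Faedo--Galerkin scheme (\ref{3.16a})--(\ref{3.16c}) combined with the a~priori bounds (\ref{3.11}), (\ref{3.12}), (\ref{3.15}), extraction of a weakly convergent subsequence, passage to the limit in the nonlinear term via Proposition~\ref{P_3.3}, and the test-function argument with $\phi\in C^1[0,T]$, $\phi(T)=0$ for the initial condition. The organization and the identification of Proposition~\ref{P_3.3} as the one nontrivial step match the paper's development in Section~\ref{Sec_.4} exactly.
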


\begin{remark}
The obtained a priori estimations and Propositions \ref{P_3.1}, \ref{P_3.3}
show for the proof of the solvability of the problem (\ref{3.3}) - (\ref{3.5}%
) can be to use also the compactness method (see, e.g. \cite{Sol3}, \cite%
{SolAhm}, and also, \cite{Sol2}, \cite{Sol4}).
\end{remark}

\section{\label{Sec_5}Uniqueness of Weak Solution of Problem (\protect\ref%
{3.3}) - (\protect\ref{3.5})}

We will study the uniqueness of the solution as usual: assume that the posed
problem has, at least, two different solutions $u$ and $v$, and will show
that this is impossible.

To do so, it needs to study the following problem for the function $w=u-v$ 
\begin{equation*}
\frac{\partial w}{\partial t}-\nu \left[ \left( D_{1}^{2}+D_{2}^{2}\right) +%
\frac{1}{4}\left( a_{1}^{-1}D_{1}+a_{2}^{-1}D_{2}\right) ^{2}\right] w_{L}+
\end{equation*}%
\begin{equation*}
+\left[ u_{L1}D_{1}+u_{L2}D_{2}-\frac{1}{2}u_{L3}\left(
a_{1}^{-1}D_{1}+a_{2}^{-1}D_{2}\right) \right] u_{L}-
\end{equation*}%
\begin{equation*}
-\left[ v_{L1}D_{1}+v_{L2}D_{2}-\frac{1}{2}v_{L3}\left(
a_{1}^{-1}D_{1}+a_{2}^{-1}D_{2}\right) \right] v_{L}=0
\end{equation*}%
\begin{equation}
\func{div}w_{L}=D_{1}w_{L1}+D_{2}w_{L2}-\frac{1}{2}\left(
a_{1}^{-1}D_{1}+a_{2}^{-1}D_{2}\right) w_{L3}=0,  \label{3.18}
\end{equation}%
\begin{equation}
w_{L}\left( 0,x\right) =0,\quad x\in \Omega \cap L;\quad w_{L}\left\vert \
_{\left( 0,T\right) \times \partial \Omega _{L}}\right. =0.  \label{3.19}
\end{equation}

Hence we derive 
\begin{equation*}
\frac{1}{2}\frac{d}{dt}\left\Vert w\right\Vert _{2}^{2}+\nu \left(
\left\Vert D_{1}w\right\Vert _{2}^{2}+\left\Vert D_{2}w\right\Vert
_{2}^{2}\right) +\frac{\nu }{4}\left\Vert \left(
a_{1}^{-1}D_{1}+a_{2}^{-1}D_{2}\right) w_{L}\right\Vert _{H\left( \Omega
_{L}\right) }^{2}+
\end{equation*}%
\begin{equation*}
+\left\langle u_{L1}D_{1}u_{L}-v_{L1}D_{1}v_{L},w_{L}\right\rangle _{\Omega
_{L}}+\left\langle u_{L2}D_{2}u_{L}-v_{L2}D_{2}v_{L},w_{L}\right\rangle
_{\Omega _{L}}-
\end{equation*}%
\begin{equation}
\frac{1}{2}a_{1}^{-1}\left\langle
u_{L3}D_{1}u_{L}-v_{L3}D_{1}v_{L},w_{L}\right\rangle _{\Omega _{L}}-\frac{1}{%
2}a_{2}^{-1}\left\langle
u_{L3}D_{2}u_{L}-v_{L3}D_{2}v_{L},w_{L}\right\rangle _{\Omega _{L}}=0.
\label{3.20}
\end{equation}

If consider separately the last 4 terms in the sum of the left part of (\ref%
{3.20}) and these are simplified by calculations then we get 
\begin{equation*}
\left\langle w_{L1}D_{1}u_{L}+v_{L1}D_{1}w_{L},w_{L}\right\rangle _{\Omega
_{L}}+\left\langle w_{L2}D_{2}u_{L}+v_{L2}D_{2}w_{L},w_{L}\right\rangle
_{\Omega _{L}}-
\end{equation*}%
\begin{equation*}
-\frac{1}{2a_{1}}\left\langle
w_{L3}D_{1}u_{L}+v_{L3}D_{1}w_{L},w_{L}\right\rangle _{\Omega _{L}}-\frac{1}{%
2a_{2}}\left\langle w_{L3}D_{2}u_{L}-v_{L3}D_{2}w_{L},w_{L}\right\rangle
_{\Omega _{L}}=
\end{equation*}%
\begin{equation*}
=\left\langle w_{L1}D_{1}u_{L}+v_{L1}D_{1}w_{L},w_{L}\right\rangle _{\Omega
_{L}}+\left\langle w_{L2}D_{2}u_{L}+v_{L2}D_{2}w_{L},w_{L}\right\rangle
_{\Omega _{L}}-
\end{equation*}%
\begin{equation*}
-\frac{1}{2a_{1}}\left\langle
w_{L3}D_{1}u_{L}+v_{3}D_{1}w_{L},w_{L}\right\rangle _{\Omega _{L}}-\frac{1}{%
2a_{2}}\left\langle w_{3}D_{2}u_{L}+v_{3}D_{2}w_{L},w_{L}\right\rangle
_{\Omega _{L}}=
\end{equation*}%
\begin{equation*}
=\left\langle \left( w_{L1}D_{1}+w_{L2}D_{2}\right) u_{L},w_{L}\right\rangle
_{\Omega _{L}}+\frac{1}{2}\left\langle v_{L1},D_{1}w_{L}^{2}\right\rangle
_{\Omega _{L}}+\frac{1}{2}\left\langle v_{L2},D_{2}w_{L}^{2}\right\rangle
_{\Omega _{L}}-
\end{equation*}%
\begin{equation*}
\left\langle w_{L3}\left( \frac{1}{2a_{1}}D_{1}+\frac{1}{2a_{2}}D_{2}\right)
u_{L},w_{L}\right\rangle _{\Omega _{L}}-\left\langle \frac{v_{L3}}{4a_{1}}%
,D_{1}w_{L}^{2}\right\rangle _{\Omega _{L}}-\left\langle \frac{v_{L3}}{4a_{2}%
},D_{2}w_{L}^{2}\right\rangle _{\Omega _{L}}
\end{equation*}%
\begin{equation*}
=\frac{1}{2}\left\langle v_{L1}-\frac{1}{2a_{1}}v_{L3},D_{1}w^{2}\right%
\rangle _{\Omega _{L}}+\frac{1}{2}\left\langle v_{L2}-\frac{1}{2a_{2}}%
v_{L3},D_{2}w_{L}^{2}\right\rangle _{\Omega _{L}}+
\end{equation*}%
\begin{equation*}
+\left\langle \left( w_{L1}-\frac{1}{2a_{1}}w_{L3}\right)
D_{1}u_{L},w_{L}\right\rangle _{\Omega _{L}}+\left\langle \left( w_{L2}-%
\frac{1}{2a_{2}}w_{L3}\right) D_{2}u_{L},w_{L}\right\rangle _{\Omega _{L}}=
\end{equation*}%
\begin{equation*}
=\left\langle \left( w_{L1}-\frac{1}{2a_{1}}w_{L3}\right)
D_{1}u_{L},w_{L}\right\rangle _{\Omega _{L}}+\left\langle \left( w_{L2}-%
\frac{1}{2a_{2}}w_{L3}\right) D_{2}u_{L},w_{L}\right\rangle _{\Omega _{L}},
\end{equation*}%
where used the equation $\func{div}v=0$ (see, (\ref{3.4})) and the condition
(\ref{3.19}). Takes into account this equality in equation (\ref{3.20}) then
we get 
\begin{equation*}
\frac{1}{2}\frac{d}{dt}\left\Vert w_{L}\right\Vert _{2}^{2}+\nu \left(
\left\Vert D_{1}w_{L}\right\Vert _{2}^{2}+\left\Vert D_{2}w_{L}\right\Vert
_{2}^{2}\right) +\frac{\nu }{4}\left\Vert \left(
a_{1}^{-1}D_{1}+a_{2}^{-1}D_{2}\right) w_{L}\right\Vert _{H\left( \Omega
_{L}\right) }^{2}+
\end{equation*}%
\begin{equation}
+\left\langle \left( w_{1}-\frac{1}{2a_{1}}w_{3}\right)
D_{1}u_{L},w_{L}\right\rangle _{\Omega _{L}}+\left\langle \left( w_{2}-\frac{%
1}{2a_{2}}w_{3}\right) D_{2}u_{L},w_{L}\right\rangle _{\Omega _{L}}=0,
\label{3.21}
\end{equation}

for a.e. $t\in \left( 0,T\right) .$Thus, we arrive the Cauchy problem for
equation (\ref{3.21}) with the initial condition 
\begin{equation}
\left\Vert w_{L}\right\Vert _{2}\left( 0\right) =0.  \label{3.22}
\end{equation}

Hence, by conducting the appropriate estimates, we get the following Cauchy
problem for the differential inequation 
\begin{equation*}
\frac{1}{2}\frac{d}{dt}\left\Vert w_{L}\right\Vert _{2}^{2}+\nu \left(
\left\Vert D_{1}w_{L}\right\Vert _{2}^{2}+\left\Vert D_{2}w_{L}\right\Vert
_{2}^{2}\right) \leq
\end{equation*}%
\begin{equation}
\left\vert \left\langle \left( w_{L1}-a_{1}^{-1}w_{_{L}3}\right)
w_{L},D_{1}u_{L}\right\rangle _{\Omega _{L}}\right\vert +\left\vert
\left\langle \left( w_{L2}-a_{2}^{-1}w_{L3}\right)
w_{L},D_{2}u_{L}\right\rangle _{\Omega _{L}}\right\vert ,  \label{3.21a}
\end{equation}

with the initial condition (\ref{3.22}).

If to estimate the right side of (\ref{3.21a}) then we have 
\begin{equation*}
\left\vert \left\langle \left( w_{L1}-a_{1}^{-1}w_{L3}\right)
w_{L},D_{1}u_{L}\right\rangle _{\Omega _{L}}\right\vert +\left\vert
\left\langle \left( w_{L2}-a_{2}^{-1}w_{L3}\right)
w_{L},D_{2}u_{L}\right\rangle _{\Omega _{L}}\right\vert \leq
\end{equation*}%
\begin{equation*}
\left( \left\Vert w_{L1}-a_{1}^{-1}w_{L3}\right\Vert _{4}+\left\Vert
w_{L2}-a_{2}^{-1}w_{L3}\right\Vert _{4}\right) \left\Vert w_{L}\right\Vert
_{4}\left\Vert \nabla u_{L}\right\Vert _{2}\leq
\end{equation*}%
whence follows 
\begin{equation*}
\left( 1+\max \left\{ \left\vert a_{1}^{-1}\right\vert ,\left\vert
a_{2}^{-1}\right\vert \right\} \right) \left\Vert w_{L}\right\Vert
_{4}^{2}\left\Vert \nabla u_{L}\right\Vert _{2}\leq c\left\Vert
w_{L}\right\Vert _{2}\left\Vert \nabla w_{L}\right\Vert _{2}\left\Vert
\nabla u_{L}\right\Vert _{2}
\end{equation*}%
thanks of Gagliardo-Nirenberg inequality (\cite{BesIlNik}).

It needs to be noted that 
\begin{equation*}
\left( w_{L1}-a_{1}^{-1}w_{L3}\right) w_{L},\ \left(
w_{L2}-a_{2}^{-1}w_{L3}\right) w_{L}\in L^{2}\left( 0,T;V^{\ast }\left(
\Omega _{L}\right) \right) ,
\end{equation*}%
due to (\ref{3.16}).

Now, taking into account this in (\ref{3.21a}) one can arrive at the
following Cauchy problem for differential inequation 
\begin{equation*}
\frac{1}{2}\frac{d}{dt}\left\Vert w_{L}\right\Vert _{2}^{2}\left( t\right)
+\nu \left\Vert \nabla w_{L}\right\Vert _{2}^{2}\left( t\right) \leq
c\left\Vert w_{L}\right\Vert _{2}\left( t\right) \left\Vert \nabla
w_{L}\right\Vert _{2}\left( t\right) \left\Vert \nabla u_{L}\right\Vert
_{2}\left( t\right) \leq
\end{equation*}%
\begin{equation*}
C\left( c,\nu \right) \left\Vert \nabla u_{L}\right\Vert _{2}^{2}\left(
t\right) \left\Vert w_{L}\right\Vert _{2}^{2}\left( t\right) +\nu \left\Vert
\nabla w_{L}\right\Vert _{2}^{2}\left( t\right) ,\quad \left\Vert
w_{L}\right\Vert _{2}\left( 0\right) =0,
\end{equation*}%
since $w_{L}\in L^{\infty }\left( 0,T;H\left( \Omega _{L}\right) \right) $,
where $C\left( c,\nu \right) >0$ is constant. Consequently, according to the
existence theorem, $w_{L}\in \mathcal{V}\left( Q_{L}^{T}\right) $ and $%
\left\Vert w_{L}\right\Vert _{2}\left\Vert \nabla w_{L}\right\Vert _{2}\in
L^{2}\left( 0,T\right) $.

Thus, we obtain the problem 
\begin{equation*}
\frac{d}{dt}\left\Vert w_{L}\right\Vert _{2}^{2}\left( t\right) \leq
2C\left( c,\nu \right) \left\Vert \nabla u_{L}\right\Vert _{2}^{2}\left(
t\right) \left\Vert w_{L}\right\Vert _{2}^{2}\left( t\right) ,\quad
\left\Vert w_{L}\right\Vert _{2}\left( 0\right) =0.
\end{equation*}%
If to denote $\left\Vert w_{L}\right\Vert _{2}^{2}\left( t\right) \equiv
y\left( t\right) $ then 
\begin{equation*}
\frac{d}{dt}y\left( t\right) \leq 2C\left( c,\nu \right) \left\Vert \nabla
u_{L}\right\Vert _{2}^{2}\left( t\right) y\left( t\right) ,\quad y\left(
0\right) =0.
\end{equation*}

Whence follows $\left\Vert w\right\Vert _{L2}^{2}\left( t\right) \equiv
y\left( t\right) =0$, and consequently the following result is proven. \ 

\begin{theorem}
\label{Th_2.2}Under conditions the Theorem \ref{Th_1} for any 
\begin{equation*}
\left( f_{L},u_{L0}\right) \in L^{2}\left( 0,T;V^{\ast }\left( \Omega
_{L}\right) \right) \times H\left( \Omega _{L}\right)
\end{equation*}%
the problem (\ref{3.3}) - (\ref{3.5}) has a unique weak solution $%
u_{L}\left( t,x\right) $ that is contained in $\mathcal{V}\left(
Q_{L}^{T}\right) $.
\end{theorem}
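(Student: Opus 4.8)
The plan is to prove uniqueness in the standard energy manner, transported to the modified operators $A_1$ and $B_1$, and then to combine it with the existence assertion of Theorem~\ref{Th_2.1}. Suppose problem (\ref{3.3})--(\ref{3.5}) has two weak solutions $u_L,v_L\in\mathcal{V}(Q_L^T)$ issued from the same data $(f_L,u_{L0})$, and put $w_L=u_L-v_L$. Subtracting the two copies of the defining identity (\ref{3.6}) and using the bilinearity $B_1(u_L,u_L)-B_1(v_L,v_L)=B_1(w_L,u_L)+B_1(v_L,w_L)$, one sees that $w_L$ is a weak solution of (\ref{3.18})--(\ref{3.19}) with vanishing initial datum. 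Since $w_L\in L^2(0,T;V(\Omega_L))$ and $w_L'\in L^2(0,T;V^\ast(\Omega_L))$, the function $w_L$ is an admissible test function in its own equation for a.e.\ $t$, which yields the energy identity (\ref{3.20}).

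The next step is to treat the four convective terms. The key manipulation is to write each difference as $u_{Lk}D_ku_L-v_{Lk}D_kv_L=w_{Lk}D_ku_L+v_{Lk}D_kw_L$, pair it against $w_L$, and then integrate by parts in the terms in which $w_L$ carries the derivative, invoking the modified incompressibility condition (\ref{3.4}) for $v_L$ together with the homogeneous boundary condition (\ref{3.19}). As the chain of equalities preceding (\ref{3.21}) shows, all such terms reassemble, after integration by parts, into $\tfrac12\langle v_{L1}-\tfrac{1}{2a_1}v_{L3},D_1w_L^2\rangle_{\Omega_L}+\tfrac12\langle v_{L2}-\tfrac{1}{2a_2}v_{L3},D_2w_L^2\rangle_{\Omega_L}=-\tfrac12\int|w_L|^2\,\operatorname{div}v_L=0$; what remains is only the term $\langle(w_{L1}-\tfrac{1}{2a_1}w_{L3})D_1u_L+(w_{L2}-\tfrac{1}{2a_2}w_{L3})D_2u_L,\,w_L\rangle_{\Omega_L}$. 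On the elliptic side, the coercivity established in the spirit of Proposition~\ref{Pr_4.1} gives $\nu(\|D_1w_L\|_2^2+\|D_2w_L\|_2^2)+\tfrac{\nu}{4}\|(a_1^{-1}D_1+a_2^{-1}D_2)w_L\|_2^2\ge\nu\|\nabla w_L\|_2^2$. Together these facts produce the differential inequality (\ref{3.21a}).

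The heart of the argument, and the step I expect to be the main obstacle, is to bound the surviving convective term so that it is absorbed by $\nu\|\nabla w_L\|_2^2$. Here the two-dimensionality of $\Omega_L$ is decisive: by the Gagliardo--Nirenberg inequality $\|w_L\|_4^2\le c\|w_L\|_2\|\nabla w_L\|_2$, that term is dominated by $c\,\|w_L\|_2\,\|\nabla w_L\|_2\,\|\nabla u_L\|_2$, and Young's inequality then splits off $\nu\|\nabla w_L\|_2^2$ at the cost of $C(c,\nu)\|\nabla u_L\|_2^2\|w_L\|_2^2$. One also has to check that $(w_{L1}-\tfrac{1}{2a_1}w_{L3})w_L$ and $(w_{L2}-\tfrac{1}{2a_2}w_{L3})w_L$ indeed belong to $L^2(0,T;V^\ast(\Omega_L))$ so that all the pairings are meaningful; this follows from $w_L\in\mathcal{V}(Q_L^T)$, cf.\ (\ref{3.16}).

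Finally, cancelling $\nu\|\nabla w_L\|_2^2$ on both sides and setting $y(t)=\|w_L\|_2^2(t)$, one reaches $y'(t)\le 2C(c,\nu)\|\nabla u_L\|_2^2(t)\,y(t)$ with $y(0)=0$, where $t\mapsto\|\nabla u_L\|_2^2(t)$ lies in $L^1(0,T)$ because $u_L\in L^2(0,T;V(\Omega_L))$. Gronwall's lemma forces $y\equiv 0$, hence $w_L\equiv 0$ in $L^\infty(0,T;H(\Omega_L))$ and $u_L=v_L$. Combined with the solvability from Theorem~\ref{Th_2.1}, this establishes Theorem~\ref{Th_2.2}.
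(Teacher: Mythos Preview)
Your proposal is correct and follows essentially the same route as the paper's own argument in Section~\ref{Sec_5}: the same decomposition $B_1(u_L,u_L)-B_1(v_L,v_L)=B_1(w_L,u_L)+B_1(v_L,w_L)$, the same cancellation via $\operatorname{div}v_L=0$ leading to (\ref{3.21}), the same two-dimensional Gagliardo--Nirenberg bound $\|w_L\|_4^2\le c\|w_L\|_2\|\nabla w_L\|_2$, Young's inequality, and the Gronwall-type conclusion. If anything, you are slightly more explicit than the paper in justifying why $w_L$ is an admissible test function and why the coefficient $\|\nabla u_L\|_2^2$ lies in $L^1(0,T)$.
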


\section{\label{Sec_6}Proof of Theorem \protect\ref{Th_1}}

\begin{proof}
(of Theorem \ref{Th_1}). It is well known, under the above-mentioned
conditions problem (\ref{2.1}) - (\ref{2.3}) is weakly solvable and any
solution belongs to the space $\mathcal{V}\left( Q^{T}\right) $ (see, Section%
\ref{Sec_2}). Consequently, under the conditions of Theorem \ref{Th_1} this
problem has a weak solution that belongs, at least, to the space $\mathcal{V}%
\left( Q^{T}\right) $. Since the auxiliary problems (\ref{3.3})-(\ref{3.5})
are constructed in such a way that each of these is the reduced (projective)
problem of the main problem (\ref{2.1}) - (\ref{2.3}), consequently, their
solutions are the projections of the solution to the main problem onto the
cross-section of the domain. Therefore, under the conditions of Theorem \ref%
{Th_1} the relations between the weak solution to the main problem (\ref{2.1}%
) - (\ref{2.3}) and the weak solution to the auxiliary problems (\ref{3.3})-(%
\ref{3.5}) satisfy the conditions of Lemma \ref{L_2.2}. Consequently,
according to the above Lemma \ref{L_2.2} it is sufficient to investigate the
existence and uniqueness of the weak solution to the posed problem on an
arbitrarily fixed subdomain to show the existence or non-existence of such
subdomains that, on which the studied problem can possess more than one
solution. In other words, it is sufficient to study this question in the
case when subdomains are generated by arbitrarily fixed hyperplanes by Lemma %
\ref{L_2.2}. So to do this, it is enough to prove that such subdomains don't
exist. Whence follows, to end the proof, it needs to use the obtained above
results on the existence and uniqueness of the weak solution to the
auxiliary problems (\ref{3.3})-(\ref{3.5}) (i.e. Theorems \ref{Th_2.1} and %
\ref{Th_2.2}).

Consequently, we can to employ Lemma \ref{L_2.2} to these functions $u\in 
\mathcal{V}\left( Q^{T}\right) $ on $Q^{T}$ and $u_{L}\in \mathcal{V}\left(
Q_{L}^{T}\right) $ on $Q_{L}^{T}$, respectively. So, assume problem (\ref%
{2.1}) - (\ref{2.3}) has, at least, two different weak solutions under
conditions of Theorem \ref{Th_1}. Then there exists, at least, such
subdomain of $Q^{T}\equiv \left( 0,T\right) \times \Omega $, on which these
functions are different.

But, as follows from theorems that were proved for the problem (\ref{3.3})-(%
\ref{3.5}) in the previous sections do not exist such subdomains, on which
the problem (\ref{2.1}) - (\ref{2.3}) could be possessed more than one weak
solution according to Lemma \ref{L_2.2}.

Consequently, the uniqueness of the weak solution to the problem (\ref{2.1})
- (\ref{2.3}) under the conditions of Theorem \ref{Th_1} proved thanks to
Lemma \ref{L_2.2}.
\end{proof}

Hence can make the following conclusion.

\subsection{Conclusion I}

Since $L^{2}\left( 0,T;H^{1/2}\left( \Omega \right) \right) $ and $%
H^{1/2}\left( \Omega \right) $ are everywhere dense in spaces $L^{2}\left(
0,T;V^{\ast }\left( \Omega \right) \right) $ and $H\left( \Omega \right) $,
respectively, then any neighborhoods of the given functions from $\qquad
L^{2}\left( 0,T;V^{\ast }\left( \Omega \right) \right) $ and $H\left( \Omega
\right) $ contains elements from $L^{2}\left( 0,T;H^{1/2}\left( \Omega
\right) \right) $ and $H^{1/2}\left( \Omega \right) $, respectively.

Then one can state that there exists such everywhere dense subset $U$ of $%
H\left( \Omega \right) $ $U:$ $H^{1/2}\left( \Omega \right) \subseteq
U\subset H\left( \Omega \right) $ and such everywhere dense subset $V$ of $%
L^{2}\left( 0,T;V^{\ast }\left( \Omega \right) \right) $ 
\begin{equation*}
V:L^{2}\left( 0,T;H^{1/2}\left( \Omega \right) \right) \subseteq V\subset
L^{2}\left( 0,T;V^{\ast }\left( \Omega \right) \right)
\end{equation*}%
that for each $u_{0}\in U$ and $f\in V$ the problem (\ref{2.1}) - (\ref{2.3}%
) has a unique weak solution.

\section{\label{Sec_7}One "local" result on the uniqueness of the weak
solution}

We believe there has a sense to provide here still one result that is
connected with the same question, which is without complementary smoothness
conditions. Here, the known approach that takes into account other
properties of this problem is used.

Let the posed problem (\ref{2.1})-(\ref{2.3}) have two different solutions: $%
u,v\in \mathcal{V}\left( Q^{T}\right) $, then within the known approach we
get the following problem for vector function $w(t,x)=u(t,x)-v(t,x)$ 
\begin{equation}
\frac{1}{2}\frac{\partial }{\partial t}\left\Vert w\right\Vert _{2}^{2}+\nu
\left\Vert \nabla w\right\Vert _{2}^{2}+\underset{j,k=1}{\overset{3}{\sum }}%
\left\langle \frac{\partial v_{k}}{\partial x_{j}}w_{k},w_{j}\right\rangle
=0,  \label{7.1}
\end{equation}%
\begin{equation}
w\left( 0,x\right) =w_{0}\left( x\right) =0,\quad x\in \Omega ;\quad
w\left\vert \ _{\left[ 0,T\right] \times \partial \Omega }=0\right. ,
\label{7.2}
\end{equation}%
where $\Omega \subset 
%TCIMACRO{\U{211d} }%
%BeginExpansion
\mathbb{R}
%EndExpansion
^{3}$ is the above-mentioned domain.\ 

So, for the proof of the triviality of the solution of the problem (\ref{7.1}%
)-(\ref{7.2}), as usual, is used the method of contradiction. Consequently,
we will start by assuming that this problem has a nontrivial solution,
moreover, will use the feature of having nonlinearity of this problem.

In the beginning, we will study the following quadratic form (\cite{Gan}) 
\begin{equation*}
B\left( w,w\right) =\underset{j,k=1}{\overset{3}{\sum }}\left( \frac{%
\partial v_{k}}{\partial x_{j}}w_{k}w_{j}\right) \left( t,x\right) ,
\end{equation*}%
denoting it as 
\begin{equation*}
B\left( w,w\right) \equiv \underset{j,k=1}{\overset{3}{\sum }}\left(
a_{jk}w_{k}w_{j}\right) \left( t,x\right) .
\end{equation*}%
As is known, the quadratic form $B\left( w,w\right) $ can be transformed
into the canonical form 
\begin{eqnarray*}
B\left( w,w\right) &\equiv &\underset{i=1}{\overset{3}{\sum }}\left(
b_{i}w_{i}^{2}\right) \left( t,x\right) ,\quad b_{i}\left( t,x\right) \equiv
b_{i}\left( \overline{D_{j}v_{k}}\right) , \\
\text{where \ }D_{i}v_{k} &\equiv &\frac{\partial v_{k}}{\partial x_{i}}%
,\quad i,k=1,2,3,\quad b_{i}:%
%TCIMACRO{\U{211d} }%
%BeginExpansion
\mathbb{R}
%EndExpansion
^{9}\longrightarrow 
%TCIMACRO{\U{211d} }%
%BeginExpansion
\mathbb{R}
%EndExpansion
\text{ be functions.}
\end{eqnarray*}

Then the matrix $\left\Vert a_{jk}\right\Vert $ of coefficients of the
quadratic form $B\left( w,w\right) $\ can be represented in the following
symmetric form 
\begin{equation*}
\left\Vert a_{jk}\right\Vert _{j,k=1}^{3}=\left\Vert 
\begin{array}{ccc}
a_{11} & a_{12} & a_{13} \\ 
a_{21} & a_{22} & a_{23} \\ 
a_{31} & a_{32} & a_{33}%
\end{array}%
\right\Vert ,\quad \text{where }a_{jk}=a_{kj}=\frac{1}{2}\left(
D_{j}v_{k}+D_{k}v_{j}\right) ,
\end{equation*}%
In this case, the above representation exists according to the symmetry of
matrix $\left\Vert a_{jk}\right\Vert _{j,k=1}^{3}$ (see, \cite{Gan}). Under
such transformation, coefficients $b_{i}$ have the following presentations 
\begin{equation}
b_{1}=D_{1}v_{1};\ b_{2}=D_{2}v_{2}-\frac{\left(
D_{1}v_{2}+D_{2}v_{1}\right) ^{2}}{4b_{1}};\ b_{3}=\frac{\det \left\Vert
D_{i}v_{k}\right\Vert _{i,k=1}^{3}}{\det \left\Vert D_{i}v_{k}\right\Vert
_{i,k=1}^{2}},  \label{7.3a}
\end{equation}%
where $\left\Vert D_{i}v_{k}\right\Vert _{i,k=1}^{3}$\ and $\left\Vert
D_{i}v_{k}\right\Vert _{i,k=1}^{2}$ define by equalities

\begin{center}
$\left\Vert D_{i}v_{k}\right\Vert _{i,k=1}^{3}\equiv \left\Vert 
\begin{array}{ccc}
D_{1}v_{1} & \frac{1}{2}\left( D_{1}v_{2}+D_{2}v_{1}\right) & \frac{1}{2}%
\left( D_{1}v_{3}+D_{3}v_{1}\right) \\ 
\frac{1}{2}\left( D_{1}v_{2}+D_{2}v_{1}\right) & D_{2}v_{2} & \frac{1}{2}%
\left( D_{2}v_{3}+D_{3}v_{2}\right) \\ 
\frac{1}{2}\left( D_{1}v_{3}+D_{3}v_{1}\right) & \frac{1}{2}\left(
D_{2}v_{3}+D_{3}v_{2}\right) & D_{3}v_{3}%
\end{array}%
\right\Vert $
\end{center}

and

\begin{center}
$\left\Vert D_{i}v_{k}\right\Vert _{i,k=1}^{2}\equiv \left\Vert 
\begin{array}{cc}
D_{1}v_{1} & \frac{1}{2}\left( D_{1}v_{2}+D_{2}v_{1}\right) \\ 
\frac{1}{2}\left( D_{1}v_{2}+D_{2}v_{1}\right) & D_{2}v_{2}%
\end{array}%
\right\Vert $ \ 
\end{center}

for any $\left( t,x\right) \in Q^{T}\equiv \left( 0,T\right) \times \Omega $.

Therefore, we have 
\begin{equation}
\left( B\left( w,w\right) \right) \left( t,x\right) \equiv \underset{j,k=1}{%
\overset{3}{\sum }}\left( a_{jk}w_{k}w_{j}\right) \left( t,x\right) \equiv 
\underset{j=1}{\overset{3}{\sum }}b_{j}\left( t,x\right) \cdot
w_{j}^{2}\left( t,x\right)  \label{7.3}
\end{equation}

that can rewrite in the following open form 
\begin{equation*}
B\left( w,w\right) \equiv \frac{1}{D_{1}v_{1}}\left[ 2D_{1}v_{1}w_{1}+\left(
D_{1}v_{2}+D_{2}v_{1}\right) w_{2}+\left( D_{1}v_{3}+D_{3}v_{1}\right) w_{3}%
\right] ^{2}+
\end{equation*}%
\begin{equation*}
\frac{1}{\left( 4D_{1}v_{1}\right) ^{2}}\left( 4D_{1}v_{1}D_{2}v_{2}-\left(
D_{1}v_{2}+D_{2}v_{1}\right) ^{2}\right) \times
\end{equation*}%
\begin{equation*}
\left[ \left( 4D_{1}v_{1}D_{2}v_{2}-\left( D_{1}v_{2}+D_{2}v_{1}\right)
^{2}\right) w_{2}\right. +
\end{equation*}%
\begin{equation*}
\left. \left( 2D_{1}v_{1}\left( D_{2}v_{3}+D_{3}v_{2}\right) -\left(
D_{1}v_{2}+D_{2}v_{1}\right) \left( D_{1}v_{3}+D_{3}v_{1}\right) \right)
w_{3}\right] ^{2}+
\end{equation*}%
\begin{equation*}
\frac{1}{4}\left[ 4D_{1}v_{1}D_{2}v_{2}D_{3}v_{3}+\left(
D_{1}v_{2}+D_{2}v_{1}\right) \left( D_{1}v_{3}+D_{3}v_{1}\right) \left(
D_{2}v_{3}+D_{3}v_{2}\right) \right. -
\end{equation*}%
\begin{equation*}
\left. D_{1}v_{1}\left( D_{2}v_{3}+D_{3}v_{2}\right) ^{2}-D_{2}v_{2}\left(
D_{1}v_{3}+D_{3}v_{1}\right) ^{2}-D_{3}v_{3}\left(
D_{1}v_{2}+D_{2}v_{1}\right) ^{2}\right] w_{3}^{2}.
\end{equation*}

Taking into account (\ref{7.3}) in the equation ((\ref{7.1}) we get 
\begin{equation*}
\frac{1}{2}\frac{\partial }{\partial t}\left\Vert w\right\Vert _{2}^{2}+\nu
\left\Vert \nabla w\right\Vert _{2}^{2}+\underset{j=1}{\overset{3}{\sum }}%
\left\langle b_{j}w_{j},w_{j}\right\rangle =0,\quad \left\Vert
w_{0}\right\Vert _{2}=0,
\end{equation*}%
or 
\begin{equation}
\frac{1}{2}\frac{\partial }{\partial t}\left\Vert w\right\Vert _{2}^{2}=-\nu
\left\Vert \nabla w\right\Vert _{2}^{2}-\underset{j=1}{\overset{3}{\sum }}%
\left\langle b_{j}w_{j},w_{j}\right\rangle ,\quad \left\Vert
w_{0}\right\Vert _{2}=0.  \label{7.4}
\end{equation}

This shows that if $b_{j}\left( t,x\right) \geq 0$ for a.e. $\left(
t,x\right) \in Q^{T}$ then the posed problem has a unique solution. It
should be noted that images of functions $b_{j}\left( t,x\right) $ and $%
D_{i}v_{k}$ belong to the bounded subset of the same space. So, it remains
to investigate the cases when the mentioned inequality isn't fulfilled.

Here, the following variants are possible:

1. Integral of $B\left( w,w\right) $ is determined and non-negative 
\begin{equation*}
\underset{\Omega }{\int }B\left( w,w\right) dx=\underset{j=1}{\overset{3}{%
\sum }}\left\langle b_{j}w_{j},w_{j}\right\rangle \equiv \underset{j=1}{%
\overset{3}{\sum }}{}\underset{\Omega }{\int }b_{j}w_{j}^{2}dx\geq 0;
\end{equation*}

Then the considered problem has a unique weak solution for $t>0$.

2. The above integral is undetermined, and $\underset{j=1}{\overset{3}{\sum }%
}{}\underset{\Omega }{\int }b\ w_{j}^{2}dx\neq 0$.

In this case, for investigation of the problem (\ref{7.4}) it is necessary
to derive suitable estimates for $B\left( w,w\right) \equiv \underset{j,k=1}{%
\overset{3}{\sum }}\left( D_{i}v_{k}w_{k}w_{j}\right) $. So, for the
undetermined part with the trilinear term, using the well-known inequations,
we obtain the following estimation 
\begin{equation*}
\underset{\Omega }{\int }\left\vert B\left( w,w\right) \right\vert dx\leq
c^{2}\underset{i,j=1}{\overset{3}{\sum }}\left\Vert D_{j}v_{i}\right\Vert
_{2}\left\Vert w_{i}\right\Vert _{2}^{\frac{1}{4}}\left\Vert \nabla
w_{i}\right\Vert _{2}^{\frac{3}{4}}\left\Vert w_{j}\right\Vert _{2}^{\frac{1%
}{4}}\left\Vert \nabla w_{j}\right\Vert _{2}^{\frac{3}{4}}.
\end{equation*}

{}Thus, taking into account the above-mentioned estimate, we get the
following inequality 
\begin{equation*}
\frac{1}{2}\frac{\partial }{\partial t}\left\Vert w\left( t\right)
\right\Vert _{2}^{2}\leq -\underset{j=1}{\overset{3}{\sum }}{}\nu \left\Vert
\nabla w_{j}\left( t\right) \right\Vert _{2}^{2}+c^{2}\underset{i,j=1}{%
\overset{3}{\sum }}{}\left\Vert D_{j}v_{i}\left( t\right) \right\Vert
_{2}\left\Vert w_{i}\left( t\right) \right\Vert _{2}^{\frac{1}{2}}\left\Vert
\nabla w_{i}\left( t\right) \right\Vert _{2}^{\frac{3}{2}}
\end{equation*}%
\begin{equation*}
\leq -\underset{j=1}{\overset{3}{\sum }}\left\Vert \nabla w_{j}\left(
t\right) \right\Vert _{2}^{\frac{3}{2}}\left[ \nu \left\Vert \nabla
w_{j}\left( t\right) \right\Vert _{2}^{\frac{1}{2}}-c^{2}\underset{i=1}{%
\overset{3}{\sum }}\left\Vert D_{i}v_{j}\left( t\right) \right\Vert
_{2}\left\Vert w_{j}\left( t\right) \right\Vert _{2}^{\frac{1}{2}}\right]
\end{equation*}%
\begin{equation*}
\leq -\underset{j=1}{\overset{n}{\sum }}\left\Vert \nabla w_{j}\left(
t\right) \right\Vert _{2}^{\frac{3}{2}}\left[ \nu \lambda _{1}^{\frac{1}{4}%
}-c^{2}\underset{i=1}{\overset{n}{\sum }}\left\Vert D_{i}v_{j}\left(
t\right) \right\Vert _{2}\right] \left\Vert w_{j}\left( t\right) \right\Vert
_{2}^{\frac{1}{2}}.
\end{equation*}%
Whence follows, that if $\nu \lambda _{1}^{\frac{1}{4}}\geq c^{2}\underset{%
i=1}{\overset{3}{\sum }}\left\Vert D_{i}v_{j}\left( t\right) \right\Vert
_{2} $ then problem (\ref{2.1})-(\ref{2.3}) has only a unique weak solution
(and this solution is stable), where $\lambda _{1}$ is minimum of the
spectrum (or the first eigenvalue) of the operator Laplace. Thus, Theorem %
\ref{Th_2} is proved.

\subsection{Conclusion II}

From the above consideration follows that the sign of the trilinear term is
dependent on the relations between the accelerations of one speed by the
various directions. This shows that investigation of the process, which is
described by this mathematical model, is necessary for the relations between
the above-mentioned accelerations, and also the corresponding expressions
reduced in (\ref{7.3a}) study, to understand the dynamics of the
investigated process. In other words, this study allows determining the
direction of the move of the studying flow, since this investigation maybe
to show when the solution can be locally unique on some subdomain of $Q^{T}$.

\bigskip

\bigskip

\end{document}